\numberwithin{equation}{section}
\newtheorem{Thm}{Theorem}[section]
\newtheorem*{Thm*}{Theorem}
\newtheorem{Prop}[Thm]{Proposition}
\newtheorem*{Prop*}{Proposition}
\newtheorem{Lem}[Thm]{Lemma}
\newtheorem{Cor}[Thm]{Corollary}
\newtheorem{Fact}[Thm]{Fact}
\newtheorem{Exa}[Thm]{Example}
\theoremstyle{remark}
\theoremstyle{definition}
\newtheorem{Def}[Thm]{Definition}
\newtheorem*{Def*}{Definition}
\numberwithin{equation}{section}
\newcommand{\m}[1]{\mathbb{ #1}}
\newcommand{\mc}[1]{\mathcal{ #1}}
     \def\ol{\overline}    
\def\al{\alpha}       \def\be{\beta}        \def\ga{\gamma}
\def\de{\delta}       \def\eps{\varepsilon}  \def\ze{\zeta}
       \def\la{\lambda}      
\def\si{\sigma}                
\def\ph{\varphi}
\theoremstyle{definition}
\theoremstyle{remark}
\newtheorem{Rmq}[Thm]{Remark}
\numberwithin{equation}{section}
\newfont{\goth}{eufm10 at 12pt}
\newfont{\gots}{eufm8 at 9pt}
\def\bt{\begin{Thm}}
\def\et{\end{Thm}}
\def\br{\begin{Rmq}}
\def\er{\end{Rmq}}
\def\bc{\begin{Cor}}
\def\ec{\end{Cor}}
\def\bp{\begin{Prop}}
\def\ep{\end{Prop}}
\def\bl{\begin{Lem}}
\def\el{\end{Lem}}
\def\bd{\begin{Def}}
\def\ed{\end{Def}}
\def\bq{\begin{quotation}}
\def\eq{\end{quotation}}
\def\bfa{\begin{Fact}}
\def\efa{\end{Fact}}
\def\bexa{\begin{Exa}}
\def\eexa{\end{Exa}}
\def\ra{\rightarrow}
\def\vs{\vspace{1em}}
\begin{document}
\title{
Convolution and square 
in abelian groups III
}
\author{Yves Benoist
}
\date{}

\maketitle

\begin{abstract}
\noindent 
We know that 
the  functional equa\-tion $f\!\star\! f (2\,t) =\la f(t)^2$ on the cyclic group of odd order $d$  has a non-zero solution $f$ when $\la =\sqrt{a}\!+\!i\sqrt{b}$ where $a$, $b$ are positive integers with $a\!+\!b=d$ and $a\equiv \frac{(d+1)^2}{4}\; {\rm mod}\;4$. We show here that in this case 
the function $f$ can be chosen to be equal to the conjugate of its Fourier transform.  
\end{abstract}

\renewcommand{\thefootnote}{\fnsymbol{footnote}} 
\footnotetext{\emph{2020 Math. subject class.}  Primary 11F03~; Secondary  11F27} 
\footnotetext{\emph{Key words} Finite Fourier transform, Cyclic group, Elliptic curve, Complex multiplication, Theta function, Jacobi sums, Weil numbers.}     
\renewcommand{\thefootnote}{\arabic{footnote}} 
\newpage

{\footnotesize \tableofcontents}\nopagebreak
\newpage

\section{Introduction}
\label{secintexp}

\subsection{Critical values}
\label{seccrival}

This paper is the sequel of \cite{CSAGI} in which we introduced 
the notion of critical values. 
Let us first recall this notion.
Let $G$ be a finite abelian group of odd order $d$. 
Denote by $\mc F_G$ the space of complex valued functions on $G$.
For  $\la\in \m C$, we denote by $\mc F_\la=\mc F_{G,\la}$  the set:
\begin{equation}
\label{eqntante2} \textstyle
\mc F_\la\; :=\; \{ f: G\ra \m C\mid
f\!*\!f(2k)
\: =\; \la \, f(k)^2
 \;\;\; \mbox{\rm for all $k$ in $G$}\}
\end{equation}
A value $\la$ for which such a non-zero function $f$ 
exists is called a ``critical value on $G$'', 
or a ``$d$-critical value''
when $G$ is the cyclic group $\m Z/d\m Z$. The non-zero elements of $\mc F_\la$
are called $\la$-critical functions.
We denote by $\mc C_G$ the set of critical values on $G$ and simply by $\mc C_d$
the set of $d$-critical values. 
According to \cite[Prop. 2.1]{CSAGI}, the set $\mc C_G$ is finite and all 
the elements $\la$ of $\mc C_G$ are odd algebraic integers, i.e. the 
numbers $\frac{\la-1}{2}$ are algebraic integers. 
\vs 

The main result \cite[Thm 2.3]{CSAGI} of this first paper  is the following.

\begin{Prop}
\label{prorapirb}
Let  $G\!=\!\m Z/d\m Z$ with $d$ odd, $a$, $b$ be positive integers with 
$a\!+\!b\!=\!d$ and  
$a\!\equiv\!\frac{(d+1)^2}{4}\;{\rm  mod}\; 4$.
The values $\la_0:=\sqrt{a}\pm i\sqrt{b}$
are  $d$-critical.
\end{Prop}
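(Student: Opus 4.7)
My plan is to construct a non-zero $f \in \mc F_{\la_0}$ explicitly, via theta functions on a complex-multiplication abelian variety. The starting point is that $|\la_0|^2 = a+b = d$, so $\la_0$ is a Weil number of weight one in the CM field $K = \m Q(\sqrt a, i\sqrt b)$, and the equation itself is self-dual under Fourier transform on $G = \m Z/d\m Z$: since $d$ is odd, $2$ is invertible mod $d$, and a direct computation shows that $f\!*\!f(2k) = \la\, f(k)^2$ translates into $(\hat f \!*\! \hat f)(2k) = \bar\la\, \hat f(k)^2$. This duality exchanges $\la_0$ with $\bar\la_0$ and is, incidentally, what will let the present paper impose $f = \overline{\hat f}$.

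To build $f$, I would choose an abelian variety $A$ (an elliptic curve if $K$ is imaginary quadratic, an abelian surface otherwise) with complex multiplication by an order containing $\la_0$, so that $[\la_0]\colon A \to A$ is an isogeny of degree $|\la_0|^2 = d$. Its kernel $A[\la_0]$ is cyclic of order $d$; fix an isomorphism $A[\la_0] \simeq G$ and let $f$ be the restriction to $A[\la_0]$ of a theta function $\vartheta$ attached to a well-chosen line bundle on $A$. The classical Riemann addition formula
\[ \vartheta(x+y)\,\vartheta(x-y) \;=\; \sum_{t \in A[2]} c_t\, \vartheta_t(x)\,\vartheta_t(y), \]
restricted to $(x,y) \in A[\la_0] \times A[\la_0]$ and pushed forward through $[\la_0]$, becomes a relation of the form $f\!*\!f(2k) = C\, f(k)^2$ on $G$, with $C$ a sum of theta-null values.

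The remaining step, and the main obstacle, is to show that $C = \la_0$ rather than $-\la_0$, $\bar\la_0$, or some other Galois conjugate. I would compute $C$ from the theta transformation law under $[\la_0]$: the modulus $|C| = \sqrt d$ follows from the degree, but the precise complex argument depends on $2$-adic data in the Heisenberg representation associated to the defining line bundle. The hypothesis $a \equiv \bigl((d+1)/2\bigr)^2 \pmod 4$ is exactly what pins down this $2$-adic ambiguity: it encodes a quadratic Gauss sum congruence modulo $4$ which forces the correct branch, yielding $C = \sqrt a + i\sqrt b$. The conjugate critical value $\sqrt a - i\sqrt b$ is then obtained by passing to $\overline f$.
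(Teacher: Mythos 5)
Your overall strategy --- realize $f$ as the restriction of a theta function on a CM lattice to a cyclic subgroup of order $d$, use a Riemann-type addition formula to reduce $f\!*\!f(2k)$ to theta-nulls times $f(k)^2$, and use the modular transformation law to identify the constant --- is the strategy of the paper. But as written the proposal has two genuine gaps.

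First, the geometric setup is wrong in the generic case. You want $[\la_0]$ to be an isogeny of degree $|\la_0|^2=d$ with cyclic kernel. When $a$ is not a perfect square, $\la_0=\sqrt a+i\sqrt b$ generates the quartic CM field $K=\m Q(\sqrt a,i\sqrt b)$, and on an abelian surface with CM by $K$ the isogeny $[\la_0]$ has degree $N_{K/\m Q}(\la_0)=(a+b)^2=d^2$, not $d$; its kernel has order $d^2$ and is not cyclic of order $d$. The object that actually carries the construction is the elliptic curve $\m C/(\m Z+\tau_0\m Z)$ with $\tau_0=\tfrac{\la_0^2-d^2}{4d^2}$, which has CM by the imaginary quadratic field $\m Q(\la_0^2)=\m Q(i\sqrt{ab})$; the cyclic subgroup is simply $\tfrac1d\m Z/\m Z$, and $\la_0$ itself never acts on the curve --- it enters only through $\la_0^2$ and through the normalization $\ell\mapsto\theta(\la_0z+\tfrac{d+1}{2d}\ell,\tau_0)$ of the argument. (Also, the Fourier transform sends $\mc F_\la$ to $\mc F_{d/\la}$, which equals $\mc F_{\ol\la}$ only once $|\la|^2=d$ is known.)

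Second, and more seriously, the whole content of the proposition is the exact determination of your constant $C$, and you leave it as an assertion that the congruence on $a$ "forces the correct branch" via "a quadratic Gauss sum congruence modulo $4$". That is not a citable fact; it is the theorem. The paper has to earn it in three steps: the finite Fourier transform of $\theta_{[0]},\theta_{[1]}$ restricted to $\tfrac1d\m Z/\m Z$ (Lemma \ref{lemfout01}); Hecke's transformation formula applied to specific matrices $\si\equiv\mathds{1}\bmod 2$ with $\si\tau_0=-d^2\ol\tau_0$, which produces the Jacobi symbols and the sign $(-1)^{(d^2-1)/8}$ (Corollaries \ref{cortrathe} and \ref{cortrat01}); and finally Lemma \ref{lemfoubar4}, the one place where $a\equiv\tfrac{(d+1)^2}{4}\bmod 4$ is used, to show that $d^2\tau_0+d^2\ol\tau_0=\tfrac{a-b-d^2}{2}$ lies in $\tfrac{d^2-1}{4}+4\m Z$ and hence that $\la_0\theta_{[0]}(0,\tau_0)$ and $(-1)^{(d^2-1)/8}\la_0\theta_{[1]}(0,\tau_0)$ are real. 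Only this reality statement makes the two theta-null coefficients conspire to yield $\la_0$ rather than some other number of modulus $\sqrt d$. Without carrying out these computations the argument does not establish $C=\la_0$, so the proposal is an accurate road map rather than a proof.
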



\subsection{Fourier transform}
\label{secfoutra}

In the present paper we still focus on the cyclic group $G=\m Z/d\m Z$ and we want to study the action of the Fourier transform on the $\la$-critical functions.
We recall that the Fourier transform $f\mapsto \widehat{f}$ on
$\mc F_G$ is given by,
 for all  $k\in G$,
\begin{equation}
\label{eqnfoufor}
\textstyle
\widehat{f}(k)= \frac{1}{\sqrt{d}}\sum\limits_{\ell\in G}e^{-2i\pi k\ell/d}\;f(\ell).
\end{equation}
See for instance \cite{TerrasFourier}. The Fourier transform $f\mapsto \widehat{f}$ sends $\mc F_\la$ onto $\mc F_{d/\la}$.
One also defines the conjugate Fourier transform $f\mapsto \overline{\widehat{f}}$ on
$\mc F_G$ by
\begin{equation}
\label{eqnfoubarfor}
\textstyle
\overline{\widehat{f}}(k)= \frac{1}{\sqrt{d}}\sum\limits_{\ell\in G}e^{2i\pi k\ell/d}\;\overline{f(\ell)}.
\end{equation}
This map $f\mapsto \overline{\widehat{f}}$ is an antilinear involution of $\mc F_G$,
i.e. one has the equivalence 
\begin{equation}
\label{eqnfoubar0}
g=\overline{\widehat{f}}\Longleftrightarrow 
f=\overline{\widehat{g}}, 
\;\;\mbox{for all $f$, $g$ in $\mc F_G$.}
\end{equation}
When studying experimentally in \cite{CSAGI} the $d$-critical values, it seemed that the most interesting ones are those which have absolute value 
$|\lambda|=\sqrt{d}$.
Moreover, for all $\la$  with $|\la|=\sqrt{d}$, this map  $f\mapsto \overline{\widehat{f}}$ preserves the variety $\mc F_\la$ of $\la$-critical functions. 
Therefore it is natural to introduce the subsets
\begin{equation} 
\label{eqnbenoisto} 
\mc B^o_d:=\{ \mbox{$\la\in \mc C_d \mid$
there exists  $f\neq 0$ in $\mc F_\la$  
such that $\overline{\widehat{f}}=f$}\},\;\;{\rm and}
\end{equation} 
\begin{equation} 
	\label{eqnbenoist} 
\hspace*{-1.5ex}\mc B_d:=\{ \mbox{$\la\in \mc C_d \mid$
		there exist
		$q\!\in\!(\m Z/d\m Z)^*$, $f\neq 0$ in $\mc F_\la$  
		with $\overline{\widehat{f}}=f_q$ }\},
\end{equation} 
where $f_q$ is the function $f_q:k\mapsto f(qk)$
\vs 

We begin by a few preliminary remarks on these two finite sets~:

\br
It is equivalent in \eqref{eqnbenoist} to require that there exists a $\la$-critical function $h$ 
on $G$ such that $\overline{\widehat{h}}=\al h_q$
for some scalar $\al\in \m C$. Indeed, by
\eqref{eqnfoubar0}, this scalar $\al$ must have absolute value $|\al|=1$. Therefore writing $\al=\overline{\be}/\be$ with $\be$ in $\m C$,
the function $f:=\be h$ will satisfy $\overline{\widehat{f}}=f_q$.
\er

\br 
According to the previous discussion, all the elements $\la$ of 
$\mc B_d$ have absolute value $|\la|=\sqrt{d}$. 
\er

\br 
When $\la$ is in $\mc B_d$ then its complex conjugate $\ol{\la}$ is also in $\mc B_d$.\\
When $\la$ is in $\mc B_d^o$ with a symmetric function $f$,
then $\ol{\la}$ is also in $\mc B_d^o$.
\er

\br 
\label{remgausum}
The simplest element of $\mc B_d^o$ is $\la=\sqrt{d}$ for $d\equiv 1$ mod $4$
and is $\la=\pm i\sqrt{d}$ for $d\equiv 3$ mod $4$.
The corresponding $\la$-critical function is the Gaussian function $h:=k\mapsto \eta^{\pm k ^2}$ where $\eta=-e^{i\pi d }$. This will be explained with more details in Lemma \ref{lemgaufun}.
\er 

\br 
\label{remdircha}
The second simplest elements of $\mc B_d^o$ are $\la=\chi(4)\,J(\chi,\chi)$ where $\chi$ is a Dirichlet character modulo $d$ whose square $\chi^2$ is primitive and $J(\chi,\chi)$ its Jacobi sum.
The corresponding $\la$-critical function is the Dirichlet character $\chi$. This will be explained with more details in Lemma \ref{lemdircha}.
\er 

\br
We will see in Section \ref{secgaufun} that when $d=1$ mod $4$
the $d$-critical value $\la_0=-\sqrt{d}$ is in $\mc B_d$. For $d=5$,
one can check that
this value is not in $\mc B_d^o$. This example explains why it is natural to introduce both $\mc B^o_d$ and $\mc B_d$.
\er

The main result of this short paper is to prove that the $d$-critical values introduced in \cite{CSAGI} belong to  $\mc B_d^o$.

\bt
\label{thmfoubar}
Let  $G=\m Z/d\m Z$ with $d$ odd, $a$, $b$ be positive integers with 
$a\!+\!b\!=\!d$ and  
$a\!\equiv\!\frac{(d+1)^2}{4}\;{\rm  mod}\; 4$.
Then both values $\sqrt{a}\pm i\sqrt{b}$ belong to $\mc B^o_d$
\et

The proof of Theorem \ref{thmfoubar}
does not rely on \cite{CSAGI} nor on \cite{CSAGII}. In particular,
it gives a new proof of Proposition \ref{prorapirb}.
As in \cite{CSAGI}, the proof of this elementary statement relies 
on elliptic curves with complex multiplication and on the modularity properties of their theta functions. 
The main surprise is the fact that the function $f_{z}$ introduced in Proposition \ref{profoubar2} 
satisfies Equation \eqref{eqnfoubar2}.
And also
that the analogous functions $f_{0,z}$ and $f_{1,z}$ 
introduced in Proposition \ref{profoubar3} satisfy Equations \eqref{eqnfoubar30} 
and \eqref{eqnfoubar31}.

Other surprising aspects of the Fourier transform on cyclic groups can be found in 
\cite{BjorckSaffari}, \cite{TaoCyclic}, \cite{HaagerupFourier} and
\cite{FuhrRzeszotnik}.

\subsection{Examples}

Relying on the numerical experiments of 
Section \ref{secliscri},
one can describe the sets $\mc B_d^o$ and $\mc B_d$ for small values of $d$.
Here is the result for $d\leq 13$.

\vs 

\noindent 
$\star$ \fbox{$d=3$} \hspace{1em} $\mc B_3^o=\{\pm i\sqrt{3}\}$
and $\mc B_3=\mc B_3^o$.

\noindent 
$\star$  \fbox{$d=5$}  \hspace{1em}  $\mc B_5^o=\{\sqrt{5}
\; ,\; 1\!\pm\! 2i\}$ and $\mc B_5=\mc B_5^o\cup\{-\sqrt{5}\}$. 

\noindent 
$\star$  \fbox{$d=7$} \hspace{1em} 
$\mc B_7^o=\{ \pm i \sqrt{7} \; ,\; 
\pm 2\!\pm\! i\sqrt{3}\}$ and $\mc B_7=\mc B_7^o$.

\noindent 
$\star$ \fbox{$d=9$} \hspace{1em} 
$\mc B_9^o=
\{ 3\; ,\; \pm \sqrt{5}\!\pm\! 2i\; ,\;\pm 1\!\pm\! 2i\sqrt{2}\}$
and $\mc B_9=\mc B_9^o$.

\noindent $\star$ \fbox{$d=11$} \;  $\mc B_{11}^o=
\{\pm i\sqrt{11} ,\; 
2\!\pm\! i\sqrt{7},\;\pm 2\sqrt{2}\!\pm\! i\sqrt{3},$\\
\hspace*{8em}
$\pm (1\!+\!\eps\sqrt{5})\pm i\sqrt{5\!-\!2\eps \sqrt{5}}\;
\;{\rm for}\; \eps=\pm 1\}$ and $\mc B_{11}=\mc B_{11}^o$.

\noindent $\star$ \fbox{$d=13$} \;   $\mc B_{13}^o\supset
\{\sqrt{13},\;\pm 3\!\pm\!2i,\; \sqrt{5}\!\pm\! 2i\sqrt{2},\;\pm 1\!\pm\! 2i\sqrt{3}\}$ and\\ 
\hspace*{5em}
$\mc B_{13}\supset\mc B_{13}^o\cup\{-\sqrt{13}, -\sqrt{5}\pm 2i\sqrt{2}\}$.
\vs 

Here are a few comments on this list~:\\
- One notices that the values $-1\!\pm\! 2i$ ,  $-3$  and  $-2\!\pm\! i\sqrt{7}$ are not in the corresponding $\mc B_{d}$ because they are not $d$-critical values.\\ 
- Many of the critical values in this list are in $\mc B_d^o$ because of 
either Remark \ref{remgausum}, Lemma \ref{lemdircha}, or
Theorem \ref{thmfoubar}. \\
- For the $d$-critical values
$\la=-\sqrt{5}\!\pm\! 2i$, $-1\!\pm\! 2i\sqrt{2}$, 
$-2\sqrt{2}\!\pm\! i\sqrt{3}$,  
one can  construct explicitely symmetric $\la$-critical functions $f$
which satisfy $\overline{\widehat{f}}=f$.\\
- For the $d$-critical values 
$\la=-\sqrt{5}\!\pm\! 2i\sqrt{2}$, one can construct explicitely
symmetric $\la$-critical functions that satisfy 
$\overline{\widehat{f}}(k)=f(2k)$ for all $k$ in $\m Z/13\m Z$. 
But there exist no symmetric $\la$-critical functions
that satisfy  $\overline{\widehat{f}}=f$.\\
- For $d=13$ the above inclusions are probably equalities.\vs

As explained in \cite[\S 1.2]{CSAGI}, one might look at Equation \eqref {eqntante2} on  locally compact abelian groups $G$. When the group is $G=\m R$
or $G=\m Z$, the only $L^2$-solutions that I know are gaussian functions $f(t)=e^{at^2+bt+c}$ where $a,b,c \in \m C$,  $Re(a)<0$, 
together with, when $G=\m Z$, their restrictions to subgroups. 
When the group is $G=\m R/\m Z$, the only $L^2$-solutions that I know are proportional to a characters $\chi_n:t\mapsto e^{2i\pi nt}$ with $n\in \m Z$.

\subsection{Organization}

In Section \ref{secthecri} we prove Theorem \ref{thmfoubar}.\\
In Section \ref{secthecha} we give a new proof of Proposition \ref{prorapirb}.\\
In Section \ref{secproobd}, we explain in more detail  the above preliminary remarks. 

\section{Theta functions as critical functions}
\label{secthecri}
\bq 
In this Chapter we prove Theorem \ref{thmfoubar} 
and the more precise
Corollary \ref{corfoubar2} taking for granted 
Proposition \ref{profzcri}.
\eq 

\subsection{Theta functions} 

We recall the definition of the Jacobi theta function:
$$\textstyle
\theta_\tau (z)=\theta(z,\tau):=\sum\limits_{m\in \m Z}e^{i\pi\tau m^2}e^{2i\pi mz},
\;\;\mbox{\rm for $z\in \m C$ and $\tau\in \m H$,}
$$
where $\m H$ is the upper half plane 
$\m H=\{ \tau\in \m C\mid Im(\tau)>0\}$. 
This function is $1$-periodic: $\theta_\tau(z+1)=\theta_\tau(z)$.
We can now recall  the construction of $\la$-critical functions
on $\m Z/d\m Z$  from \cite[Prop. \!3.2]{CSAGI}.

\begin{Prop}
\label{profzcri}
Let $G\!=\!\m Z/d\m Z$ with $d$ odd,  $a$,$b$ be positive integers with 
$a\!+\!b\!=\!d$ and $a\!\equiv\!\frac{(d+1)^2}{4}\;{\rm  mod}\; 4$.
Let $\la_0:=\sqrt{a}\!+\!i\sqrt{b}$,
$
\displaystyle 
\tau_0:=\tfrac{\la_0^2-d^2}{4d^2}
$
and $z\in \m C$. Then the  function $f_{z}:\ell\mapsto \theta(\la_0z\!+\!\frac{d+1}{2d}\ell,\tau_0)$ is $\la_0$-critical
on $G$.
\end{Prop}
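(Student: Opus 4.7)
My plan is to reduce the critical equation $f_z * f_z(2k) = \lambda_0 f_z(k)^2$, by direct Fourier expansion together with orthogonality of characters on $\mathbb{Z}/d\mathbb{Z}$, to a pair of theta-constant identities at the CM point $\tau_0$, and then verify those identities via a modularity/CM argument.

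Write $w := \lambda_0 z$ and $c := (d+1)/(2d)$, so that $f_z(\ell) = \theta(w + c\ell, \tau_0)$. Substituting the Fourier expansion $\theta(w + c\ell, \tau_0) = \sum_{m \in \mathbb{Z}} e^{i\pi\tau_0 m^2 + 2i\pi m(w + c\ell)}$ into the convolution and interchanging summations yields
\begin{equation*}
f_z * f_z(2k) \;=\; \sum_{m,n \in \mathbb{Z}} e^{i\pi\tau_0(m^2+n^2) + 2i\pi(m+n)w + 4i\pi n c k}\,\sum_{\ell=0}^{d-1} e^{2i\pi c (m-n)\ell}.
\end{equation*}
Since $d$ is odd, $e^{2i\pi c} = -e^{i\pi/d}$ is a primitive $d$th root of unity, so the inner sum equals $d$ when $m \equiv n \pmod d$ and vanishes otherwise. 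I parametrize the surviving pairs by $M = m+n$, $N = m-n = dj$ with parity constraint $M \equiv j \pmod 2$; then $4i\pi nck = 2i\pi M ck - i\pi j(d+1)k$, and the extra factor $e^{-i\pi j(d+1)k}$ equals $1$ because $d+1$ is even.

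Expanding the right-hand side $\lambda_0\,\theta(w+ck,\tau_0)^2$ via the parallel change of variables $M = p+q$, $N = p-q$ (with $M \equiv N \pmod 2$), and equating coefficients in the linearly independent characters $e^{2i\pi M(w+ck)}$, the critical equation reduces to the pair of one-dimensional theta-constant identities, one for each parity $\epsilon \in \{0,1\}$:
\begin{equation*}
d\!\!\sum_{j \equiv \epsilon\,(2)}\!\! e^{i\pi\tau_0 d^2 j^2/2} \;=\; \lambda_0\!\!\sum_{N\equiv\epsilon\,(2)}\!\! e^{i\pi\tau_0 N^2/2},
\end{equation*}
or, in Jacobi's classical notation, $d\,\theta_3(2d^2\tau_0) = \lambda_0\,\theta_3(2\tau_0)$ and $d\,\theta_2(2d^2\tau_0) = \lambda_0\,\theta_2(2\tau_0)$.

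The main obstacle is establishing these two theta-constant identities. They are not formal but genuinely arithmetic: they express the fact that, at the CM point $\tau_0$ (characterised by the relation $\lambda_0^2 = d^2(4\tau_0+1)$), the lattice homothety $\tau \mapsto d^2\tau$ corresponds to multiplication by the quadratic integer $\lambda_0$ on an isogeny of the elliptic curve $\mathbb{C}/(\mathbb{Z}+\mathbb{Z}\tau_0)$. I would verify them by viewing the left-hand sides as theta sums restricted to the sublattice $d\mathbb{Z}$ and then invoking a modularity/isogeny relation for theta constants at CM points, with the sign of the proportionality constant pinned down by the hypothesis $a \equiv (d+1)^2/4 \pmod 4$. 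In spirit this is the theta analogue of the Dirichlet-character identity recalled in Remark \ref{remdircha}, with $\lambda_0$ playing the role of a ``theta Jacobi sum'' and the congruence on $a$ fixing its argument so that the proportionality constant is exactly $\lambda_0 = \sqrt{a} + i\sqrt{b}$ (and not an eighth-root-of-unity multiple of $\sqrt{d}$).
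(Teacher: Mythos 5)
Your reduction of the convolution equation is correct and is a genuinely more direct route than the paper's. You substitute the theta series into $f_z*f_z(2k)$ and use orthogonality of $\ell\mapsto e^{2i\pi c(m-n)\ell}$ (correctly noting that $e^{2i\pi c}=-e^{i\pi/d}$ is a primitive $d$th root of unity and that the parasitic factor $e^{-i\pi j(d+1)k}$ is trivial because $d+1$ is even), arriving at the two theta-constant identities $d\,\theta_{[\epsilon]}(0,d^2\tau_0)=\lambda_0\,\theta_{[\epsilon]}(0,\tau_0)$ for $\epsilon=0,1$, in the notation of Section \ref{secthecha}. The paper never touches the convolution directly: it computes the conjugate Fourier transforms of $f_z$ and of $F_z=f_z^2$ separately (Propositions \ref{profoubar2} and \ref{profoubar4}, via the splitting $\theta^2=\theta_{[0]}(0)\theta_{[0]}+\theta_{[1]}(0)\theta_{[1]}$ of Lemma \ref{lemaddthe}) and deduces criticality from $\overline{\widehat{f^2}}=\tfrac{1}{\sqrt d}\,\overline{\widehat f}*\overline{\widehat f}$ (Lemma \ref{lemcriff2}); that detour buys the extra relation $\overline{\widehat f}_{\overline z}=\alpha f_z$, which is the actual point of the article, but for Proposition \ref{profzcri} alone your path is cleaner. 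Both routes funnel into exactly the same pair of identities, which in the paper are the content of Corollary \ref{cortrat01} at $z=0$ combined with the periodicity argument inside Lemma \ref{lemfoubar4}.

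The gap is that you do not prove those two identities, and they carry the entire arithmetic weight of the proposition: they are the only place where the integrality of $a$ and the congruence $a\equiv\frac{(d+1)^2}{4}\bmod 4$ enter. ``Invoking a modularity/isogeny relation at the CM point'' is not yet an argument, because the transformation actually available is not $\tau\mapsto d^2\tau$ but the Hecke formula \eqref{eqntrathe} for $\sigma_1=\left(\begin{smallmatrix} d^2 & (d^2-1)/2\\ 2 & 1\end{smallmatrix}\right)$, which sends $2\tau_0$ to $-2d^2\overline{\tau}_0$ (not to $2d^2\tau_0$) and gives $\theta(0,-2d^2\overline{\tau}_0)=\tfrac{\lambda_0}{d}\,\theta(0,2\tau_0)$, using $(4\tau_0+1)^{1/2}=\lambda_0/d$ and a trivial Jacobi symbol. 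One must then translate back from $-2d^2\overline{\tau}_0$ to $2d^2\tau_0$ by $s:=2d^2(\tau_0+\overline{\tau}_0)=2a-d(d+1)$; the hypothesis on $a$ forces $s$ to be an even integer with $s\equiv\frac{1-d^2}{2}\bmod 8$, so the translation fixes $\theta_3$ and multiplies $\theta_2$ by $(-1)^{(d^2-1)/8}$, and this sign must be shown to cancel exactly the eighth-root-of-unity factor appearing in the $\sigma_1$-transformation law of $\theta_2$ (itself a further computation, carried out in the proof of \eqref{eqntrat1}). None of this bookkeeping is in your write-up, and it is not routine: a wrong eighth root of unity here would change $\lambda_0$ into $i^j\lambda_0$ and falsify the statement. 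To complete your proof you need to supply the equivalent of Corollary \ref{cortrat01} and Lemma \ref{lemfoubar4}.
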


A new proof of Proposition \ref{profzcri} will be given in Chapter \ref{secthecha}.
Note that we have omitted 
the dependence on $\tau_0$ in the notation $f_z$.\vs 

\br 
In \cite[Prop. \!3.2]{CSAGI} one deals with the function $\ell\mapsto \theta(z\!+\!\tfrac{\ell}{d},\tau_0)$
where the factor $\la_0$ and the factor $\tfrac{d+1}{2}$ do not occur. Note that, since $d$ is odd, 
the factor $\tfrac{d+1}{2}$ is nothing than the inverse of $2$ in the group $\m Z/d\m Z$. Introducing these factors does not change the conclusion of Proposition
\ref{profzcri} but will be crucial for the validity of 
Proposition \ref{profoubar2}.
\er

\subsection{Theta functions and Fourier transform}

In Proposition
\ref{profoubar2} the real numbers $a$ and $b$ are not assumed to be integers.

\begin{Prop}
\label{profoubar2}	Let $G\!=\!\m Z/d\m Z$ with $d$ odd,  $a$,$b$ be positive real numbers with 
$a\!+\!b\!=\!d$.
Let $\la_0:=\sqrt{a}\!+\!i\sqrt{b}$,
$\displaystyle \tau_0:=\tfrac{\la_0^2-d^2}{4d^2}$
and $z\in \m C$.
Then the function $f_{z}:\ell\mapsto \theta(\la_0z\!+\!\frac{d+1}{2d}\ell,\tau_0)$  on $G$ satisfies
\begin{equation}
\label{eqnfoubar2}
\textstyle
\overline{\widehat{f}}_{\overline{z}}
\;=\;
\frac{\la_0}{\sqrt{d}} \; e^{4i\pi d^2z^2}\, f_{z}.
\end{equation}
\end{Prop}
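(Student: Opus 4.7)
The plan is a direct computation. First, substitute the series $\theta(w,\tau_0)=\sum_{m\in\m Z}e^{i\pi\tau_0 m^2+2i\pi mw}$ into the conjugate Fourier transform of $f_{\ol z}$ and interchange the sum over $\ell\in\m Z/d\m Z$ with the series sum over $m$. The character orthogonality $\sum_{\ell=0}^{d-1}e^{i\pi u\ell/d}=d\cdot\mathbf{1}_{u\equiv 0\pmod{2d}}$, applied to the $\ell$-linear exponent $i\pi\ell(2k-m(d+1))/d$ (with $d+1$ even), collapses to the single congruence $m\equiv 2k\pmod d$, because $(d+1)/2$ is the inverse of $2$ modulo $d$. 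This clean collapse is exactly what the coefficient $(d+1)/(2d)$ in $f_z$ was designed to produce. Parameterizing the surviving $m=2k+dn$ with $n\in\m Z$ and factoring the $n$-independent pieces recognizes the resulting sum as another Jacobi theta,
\begin{equation*}
\ol{\widehat{f_{\ol z}}}(k)=\sqrt{d}\,e^{-4i\pi\ol{\tau_0}k^2-4i\pi k\ol{\la_0}z}\;\theta\!\bigl(d(\ol{\la_0}z+2\ol{\tau_0}k),\,-d^2\ol{\tau_0}\bigr).
\end{equation*}

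The second step is to transform this theta, whose parameter $\tau_1:=-d^2\ol{\tau_0}$ and affine argument in $(z,k)$ are determined by $\ol{\tau_0},\ol{\la_0}$, into the target $\theta(\la_0 z+(d+1)k/(2d),\tau_0)$. Applying Poisson summation to the $n$-sum (equivalently, Jacobi's modular inversion $\theta(z,\tau)=(-i\tau)^{-1/2}e^{-i\pi z^2/\tau}\theta(z/\tau,-1/\tau)$) and completing the square in $n$ produces a theta of parameter $1/(d^2\ol{\tau_0})$; remarkably, the $k^2$- and $kz$-cross-terms in the prefactor cancel pairwise, leaving a pure Gaussian in $z^2$ and the constant $1/(\sqrt d\sqrt{i\ol{\tau_0}})$. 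The algebraic identity $\la_0\ol{\la_0}=a+b=d$, equivalently $\ol{\la_0}^{\,2}=d^2/\la_0^{\,2}$ and $4\ol{\tau_0}+1=1/\la_0^{\,2}$, makes this Gaussian explicit and feeds the subsequent simplification. A further modular/isogeny move, reflecting the degree-$d^2$ isogeny between the CM elliptic curves $\m C/(\m Z+\tau_0\m Z)$ and $\m C/(\m Z+\tau_1\m Z)$ induced by the $\m Z[\la_0]$-action, transforms the theta parameter into $\tau_0$ and the argument into $\la_0 z+(d+1)k/(2d)$; it contributes a second Gaussian in $z^2$ which combines with the first to give precisely $e^{4i\pi d^2z^2}$, and the surviving constants reassemble, using $|\la_0|^2=d$, into $\la_0/\sqrt d$.

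The main obstacle is this last isogeny step. The relation $\tau_1=d^2\tau_0+N$ with $N=(d^2+b-a)/2\in\m Z$ (an integer because $d$ odd forces $d^2+b-a$ even) is not an $SL_2(\m Z)$-move on $\tau_0$: it combines complex conjugation with a $d^2$-dilation, reflecting the CM endomorphism ring. One navigates it either through the isogeny formula $\theta(z,d^2\tau)=d^{-1}\sum_{r=0}^{d-1}\theta((z+r)/d,\tau)$ combined with the quasi-periodicities $\theta(z,\tau+1)=\theta(z+\tfrac12,\tau)$ and $\theta(z+\la\tau+\mu,\tau)=e^{-i\pi\la^2\tau-2i\pi\la z}\theta(z,\tau)$, or through a single Poisson step on a CM-adapted lattice; in either case a finite sum of thetas must collapse to a single term, forced by the specific form of the argument $d(\ol{\la_0}z+2\ol{\tau_0}k)$. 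Tracking the $8$-th roots of unity, branch choices of $\sqrt{-i\tau}$, and the many Gaussian exponentials through this chain — and observing them assemble into the clean $e^{4i\pi d^2z^2}$ — is the ``main surprise'' flagged in the introduction and the computational heart of the proof.
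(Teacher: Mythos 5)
Your first step is sound and is essentially the paper's Lemma \ref{lemfouthe}: expanding the theta series, the orthogonality over $\ell$ forces $m\equiv 2k\ ({\rm mod}\ d)$ precisely because $\tfrac{d+1}{2}$ inverts $2$ modulo $d$, and the surviving sum over $m=2k+dn$ reassembles, after complex conjugation, into a theta at the parameter $-d^2\overline{\tau}_0$. (The paper evaluates at $2k$ rather than at $k$, which makes the argument of the transformed theta come out as $d\overline{\la}_0(z+\tfrac{k}{\la_0 d})$ and so feeds directly into the next step; your version at $k$ can be made to work but requires an additional quasi-periodicity shift of the form $\theta(w-2dk\tau_0,\tau_0)=e^{-4i\pi d^2k^2\tau_0+4i\pi dk w}\theta(w,\tau_0)$ that you only allude to.)

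The genuine gap is in your second step. You assert that the relation $-d^2\overline{\tau}_0=d^2\tau_0+N$ ``is not an $SL_2(\m Z)$-move on $\tau_0$'' and therefore route the computation through Jacobi inversion followed by the isogeny formula $\theta(z,d^2\tau)=d^{-1}\sum_{r=0}^{d-1}\theta((z+r)/d,\tau)$, hoping that the resulting finite sum of thetas ``must collapse to a single term, forced by the specific form of the argument.'' It does not collapse: for generic $z$ the $d$ functions $z\mapsto\theta((W+r)/d,\tau_0)$, $r=0,\dots,d-1$, are all nonzero and linearly independent, and no choice of $W$ kills $d-1$ of them. What you miss --- and what is the actual crux of the paper's proof --- is that for this particular $\tau_0$ the point $-d^2\overline{\tau}_0$ \emph{is} an $SL_2(\m Z)$-transform of $\tau_0$: one has $\si_0\tau_0=-d^2\overline{\tau}_0$ for the explicit matrix $\si_0=\mbox{\scriptsize$\left(\!\begin{array}{cc} d^2&\frac{d^2-1}{4}\\ 4&1\end{array}\!\right)$}\in SL_2(\m Z)$, which moreover satisfies $\si_0\equiv\mathds{1}$ mod $2$ because $d^2\equiv 1$ mod $8$. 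A single application of Hecke's transformation formula (Lemma \ref{lemtrathe}) with this $\si_0$, using $4\tau_0+1=\la_0^2/d^2$ so that the automorphy factor is $\la_0/d$ and the argument rescales as $\la_0 z\mapsto d\overline{\la}_0 z$, yields the one-term identity $\theta(d\overline{\la}_0 z,-d^2\overline{\tau}_0)=\tfrac{\la_0}{d}e^{4i\pi d^2z^2}\theta(\la_0 z,\tau_0)$ of Corollary \ref{cortrathe}; after that only the exponent bookkeeping you describe remains. Note finally that your CM/isogeny framing presupposes that $\la_0$ generates an order in an imaginary quadratic field, whereas in this proposition $a$ and $b$ are arbitrary positive reals; the needed identity is a pure modular-transformation statement holding identically in $a$.
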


Note that the function on the left-hand side is indexed by the complex conjugate $\overline{z}$. Note also that, when $a$, $b$ are not assumed to be integers, even when $z=0$, this function $f_{z}$ is not 
always $\la_0$-critical.

Theorem \ref{thmfoubar} follows from the following corollary of Proposition \ref{profoubar2} which tells us that, when $z=r$ is real, with the precise factors $\la_0$ and $\tfrac{d+1}{2}$ as above, 
the $\la_0$-critical functions $f_r$ are proportional 
to their image $\overline{\widehat{f}}_r$. 

\begin{Cor}
\label{corfoubar2}
Let $G\!=\!\m Z/d\m Z$ with $d$ odd,  $a$,$b$ be positive integers with 
$a\!+\!b\!=\!d$ and $a\!\equiv\!\frac{(d+1)^2}{4}\;{\rm  mod}\; 4$.
Let $\la_0:=\sqrt{a}\!+\!i\sqrt{b}$,
$\displaystyle \tau_0:=\tfrac{\la_0^2-d^2}{4d^2}.$
When $r$ is real, the $\la_0$-critical function $f_{r}:\ell\mapsto \theta(\la_0r\!+\!\frac{d+1}{2d}\ell,\tau_0)$  
on $G$ satisfies
\begin{equation}\label{eqnfoubar1}\textstyle
\overline{\widehat{f}}_{r}=\frac{\la_0}{\sqrt{d}} \,e^{4i\pi d^2r^2}\, f_{r}.
\end{equation}
\end{Cor}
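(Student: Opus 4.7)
The plan is to deduce the corollary as a direct specialization of Proposition \ref{profoubar2} to the case when the complex parameter $z$ is real. The arithmetic hypotheses of the corollary strengthen those of Proposition \ref{profoubar2}: here $a$ and $b$ are positive \emph{integers} with $a+b=d$, and in addition $a\equiv (d+1)^2/4$ mod $4$. The first condition (positive reals summing to $d$) is precisely what Proposition \ref{profoubar2} requires, while the extra congruence is exactly what Proposition \ref{profzcri} demands in order to guarantee that $f_r$ is $\la_0$-critical.

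First I would invoke Proposition \ref{profoubar2} to obtain, for every $z\in \m C$,
\[
\overline{\widehat{f}}_{\overline{z}} \;=\; \tfrac{\la_0}{\sqrt{d}}\, e^{4i\pi d^2 z^2}\, f_{z}.
\]
Setting $z=r$ with $r\in \m R$ forces $\overline{z}=z$ and $z^2=r^2$, so the identity reduces verbatim to \eqref{eqnfoubar1}. A separate appeal to Proposition \ref{profzcri} confirms that $f_r$ is genuinely $\la_0$-critical, completing the proof.

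The main obstacle is not in the corollary itself, which is a one-line specialization, but in establishing Proposition \ref{profoubar2}. The role of the corollary is to record the striking consequence that, for real $r$, the scalar $\alpha:=\tfrac{\la_0}{\sqrt{d}}\, e^{4i\pi d^2 r^2}$ has modulus one (since $|\la_0|^2=a+b=d$ and the exponential is unimodular), so $f_r$ is an eigenfunction of the antilinear involution $f\mapsto \overline{\widehat{f}}$ up to a unit scalar. The first preliminary remark of Section \ref{secfoutra} then rescales $f_r$ into a symmetric critical function, placing $\la_0=\sqrt{a}+i\sqrt{b}$ in $\mc B^o_d$ and yielding Theorem \ref{thmfoubar} (the value $\sqrt{a}-i\sqrt{b}$ is handled by complex conjugation).
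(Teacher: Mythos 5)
Your proposal is correct and matches the paper's treatment exactly: the corollary is the specialization of Proposition \ref{profoubar2} to $z=r$ real (so that $\overline{z}=z$ and $z^2=r^2$), with the $\la_0$-criticality of $f_r$ supplied by Proposition \ref{profzcri} under the extra integrality and congruence hypotheses. Your closing remarks on how the corollary feeds into Theorem \ref{thmfoubar} also agree with the paper's subsequent argument.
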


\subsection{Preliminary formulas}

To prove Proposition \ref{profoubar2}
we need the following two classical formulas.

The first formula computes the Fourier transform of the theta function.

\bl 
\label{lemfouthe} For $w\in \m C$, $\tau\in \m H$, $d$ positive integer
and $k\in \m Z/d\m Z$, one has
\begin{equation}
\label{eqnfouthe}\textstyle
\sum\limits_{\ell \in \m Z/d\m Z}
e^{-2i\pi k \ell/d}\;\theta(w\!+\!\ell /d,\tau)\; =\;
d\,e^{i\pi k^2\tau}\,e^{2i\pi k w}\;\theta(dw+dk\tau,d^2\tau)
\end{equation}
\el

\begin{proof}Just write the left-hand sides as a double sum over $m$ in $\m Z$
and $\ell $ in $\m Z/d\m Z$ and notice that 
$\sum_{\ell \in \m Z/d\m Z}e^{2i\pi \ell (m-k)/d}$ is equal to $d$ when $d$ divides $m\!-\!k$ 
and is equal to $0$ otherwise to obtain the right-hand side. Here are the details
\begin{eqnarray*} 
LHS&=&\textstyle
\sum\limits_{\ell \in \m Z/d\m Z, m\in \m Z}
e^{i\pi\tau m^2}e^{2i\pi mw}e^{2i\pi \ell (m-k)/d}\\
&=& \textstyle d \sum\limits_{n\in \m Z}
e^{i\pi\tau (k+dn)^2}e^{2i\pi (k+dn)w}
\; =\; RHS
\end{eqnarray*}
that prove \eqref{eqnfouthe}.
\end{proof}

The second formula is the transformation formula due to Hecke that deals with an element 
$\si =\mbox{\scriptsize $\left(\!
\begin{array}{cc} \al&\be   \\ \ga&\de\end{array}
\!\right)$} \in {\rm SL}(2,\m Z).
$

\bl 
\label{lemtrathe} If $\si\equiv  \mathds{1}$ {\rm mod} $2$,
and $\ga>0$, then, for all $w$ in $\m C$ and $\tau$ in $\m H$,
\begin{eqnarray}
\label{eqntrathe}
\theta(\tfrac{w}{\ga\tau+\de},\si\tau)
&=& i^\frac{\de-1}{2}(\!\tfrac{\ga}{\de}\!)\;
(\ga\tau\!+\!\de)^{\frac12}\;e^{i\pi \frac{\ga w^2}{\ga\tau+\de}}\;\theta(w,\tau).
\end{eqnarray}
\el

In this formula, the ${\rm SL}(2,\m Z)$ action 
on the upper half plane $\m H$ is the standard action 
$\si\tau =\frac{\al\tau+\be}{\ga\tau+\de}$,
the number $z^{\frac12}$
is the square root with positive real part of the complex number $z\in \m H$, and
the symbol  $(\!\frac{\ga}{\de}\!)=\pm 1$ is 
the Jacobi symbol.

\begin{proof} See for instance \cite[p.32]{Mum83}, \cite[p.148]{Kob84}   or \cite[Chap. 5]{Poli03}.
\end{proof}

\subsection{Proof of Proposition \ref{profoubar2} }

We begin by a Corollary of Lemma \ref{lemtrathe}.

\begin{Cor}
\label{cortrathe}	Let  $d$ be an odd integer,  $a$,$b$ be positive real numbers with 
$a\!+\!b\!=\!d$.
Let $\la_0:=\sqrt{a}\!+\!i\sqrt{b}$, and
$\displaystyle \tau_0:=\tfrac{\la_0^2-d^2}{4d^2}$.
Then, for all $z$ in $\m C$, one has 
\begin{equation}
\label{eqntrathe2}
\textstyle
\theta(d\overline{\la}_0z,-d^2\overline{\tau}_0)
\;=\; \frac{\la_0}{d}\, e^{4i\pi d^2z^2}\;
\theta(\la_0z,\tau_0)
\end{equation}
\end{Cor}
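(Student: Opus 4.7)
The plan is to deduce the identity from a single application of Hecke's formula (Lemma \ref{lemtrathe}) with $w=\la_0 z$ and $\tau=\tau_0$. The key observation is that $|\la_0|^2 = a+b = d$, hence $\overline{\la}_0 = d/\la_0$, and the target can be rewritten as $\theta(d^2 z/\la_0,-d^2\overline{\tau}_0)$. Matching this against the left-hand side $\theta(w/(\ga\tau+\de),\si\tau)$ of Hecke's formula forces two conditions on the sought matrix $\si\in {\rm SL}(2,\m Z)$: namely $\ga\tau_0+\de = \la_0^2/d^2$ and $\si\tau_0 = -d^2\overline{\tau}_0$.

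From the definition $\tau_0 = (\la_0^2-d^2)/(4d^2)$ one immediately obtains $4\tau_0+1 = \la_0^2/d^2$, so since $\tau_0$ is non-real the first condition forces $\ga=4$ and $\de=1$. A short calculation using $\la_0^2\overline{\la}_0^2 = d^2$ then reduces the second condition to $\be = (d^2-1)/4$, and from $\al\de-\be\ga=1$ one gets $\al = d^2$. Since $d$ is odd, $d^2 \equiv 1 \;{\rm mod}\; 8$, so $\be$ is in fact an even integer; thus $\si$ has integer entries, $\det\si = 1$, and $\si \equiv \mathds{1} \;{\rm mod}\; 2$. Hecke's formula therefore applies (with $\ga=4>0$).

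The rest is bookkeeping. One has $i^{(\de-1)/2}=1$ and Jacobi symbol $(\ga/\de)=(4/1)=1$; the square root $(\la_0^2/d^2)^{1/2}$ equals $\la_0/d$ because $\la_0 = \sqrt{a}+i\sqrt{b}$ lies in the open first quadrant; and the exponential $e^{i\pi\ga w^2/(\ga\tau_0+\de)}$ with $w=\la_0 z$ collapses to $e^{4i\pi d^2 z^2}$. Assembling these factors and using $d\overline{\la}_0 z = d^2 z/\la_0$ produces exactly the claimed identity.

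The genuinely nontrivial ingredient is the arithmetic fact $8 \mid d^2-1$ for odd $d$, which secures both the integrality and the evenness of $\be = (d^2-1)/4$; these are needed for $\si \in {\rm SL}(2,\m Z)$ and for $\si \equiv \mathds{1} \;{\rm mod}\; 2$, respectively. The remaining point requiring minor care is the correct branch of the square root, which is pinned down by the positive-real-part convention together with $\mathrm{Re}(\la_0) = \sqrt{a}>0$.
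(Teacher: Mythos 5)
Your proof is correct and follows exactly the paper's route: a single application of Hecke's transformation formula with the matrix $\si_0=\left(\begin{smallmatrix} d^2 & (d^2-1)/4 \\ 4 & 1\end{smallmatrix}\right)$, $w=\la_0 z$, $\tau=\tau_0$, using $\overline{\la}_0=d/\la_0$ and $4\tau_0+1=\la_0^2/d^2$. The only difference is that you spell out details the paper leaves implicit (deriving $\si_0$ from the two matching conditions, checking $8\mid d^2-1$ for the mod-$2$ congruence, and pinning down the branch of the square root), all of which check out.
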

 
\begin{proof}[Proof of Corollary \ref{cortrathe}]
We apply Lemma \ref{lemtrathe} with the matrix 
$$
\si_0
=\mbox{\scriptsize $\left(\!
\begin{array}{cc} d^2&\frac{d^2-1}{4} \\4&1\end{array}\!
\right)$} \equiv {\bf 1}\; {\rm mod}\; 2,
$$ 
with $\tau=\tau_0$ and with $w=\la_0 z$. 
This matrix $\si_0$ has been chosen so that one has the equality 
\begin{equation} 
\label{eqnsitauo}
\si_0\tau_0 =- d^2\overline{\tau}_0. 
\end{equation}
We  note that $\la_0\overline{\la}_0=d$ and that 
$1/(4\tau_0\!+\!1)=\overline{\la}_0^2$ so that 
Formula \eqref{eqntrathe2} directly follows from 
Formula \eqref{eqntrathe}.
\end{proof}

\begin{proof}[Proof of Proposition \ref{profoubar2}]
For $k$ in $\m Z/d\m Z$, we compute replacing $\ell$ by $2\ell$ in Formula \eqref{eqnfoufor}, and  using Lemma \ref{lemfouthe} with $w=\la_0\overline{z}$ and with $k$ replaced by $4k$,
\begin{eqnarray*} 
\widehat{f}_{\overline{z}}(2k)
&=&\textstyle
\frac{1}{\sqrt{d}}\sum\limits_{\ell \in \m Z/d\m Z}
e^{-8i\pi k \ell/d}\;
\theta(\la_0\overline{z}\!+\!\ell /d,\tau_0)\\
&=&
\sqrt{d}\,e^{16i\pi k^2\tau_0}\,e^{8i\pi k \la_0\overline{z}}\;\theta(d\la_0\overline{z}+4dk\tau_0,d^2\tau_0)
\end{eqnarray*}
Applying the complex conjugation to this equality,
remembering that the theta functions are $1$-periodic,
and using the equalities
\begin{equation}
\label{eqnlazlaz}
4dk\overline{\tau}_0+dk
\;=\; dk/\la_0^2
\; =\; \overline{\la}_0k/\la_0,
\;\;\mbox{\rm one gets}
\end{equation}
\begin{eqnarray*} 
\overline{\widehat{f}}_{\overline{z}}(2k)
&=&
\sqrt{d}\,e^{-16i\pi k^2\overline{\tau}_0}\,e^{-8i\pi k \overline{\la}_0z}\;\theta(d\overline{\la}_0 
(z\!+\!\tfrac{k}{\la_0 d}),-d^2\overline{\tau}_0).
\end{eqnarray*}
We now apply Corollary \ref{cortrathe}
where $z$ is replaced by $z\!+\!\tfrac{k}{\la_0 d}$ and we get
\begin{eqnarray*} 
\overline{\widehat{f}}_{\overline{z}}(2k)
&=&\textstyle
\frac{\la_0}{\sqrt{d}}\,e^{-16i\pi k^2\overline{\tau}_0}\,e^{-8i\pi k \overline{\la}_0z}\;e^{4i\pi d^2(z\!+\!\tfrac{k}{\la_0 d})^2}\;
\theta(\la_0z\!+\!k/d,\tau_0).
\end{eqnarray*}
In the exponent the terms that are linear in $k$ are equal to
$$
8i\pi kz\,(d/\la_0-\overline{\la}_0)\;=\; 0,
$$
and the terms that are quadratic in $k$ are equal to 
$$
4i\pi k^2\,(1/\la_0^2-4\overline{\tau}_0)=4i\pi k^2 \in 2i\pi \m Z.
$$
Both of them disappear, and one gets
\begin{eqnarray*} 
\overline{\widehat{f}}_{\overline{z}}(2k)
&=&\textstyle
\frac{\la_0}{\sqrt{d}}\;e^{4i\pi d^2z^2}\;
\theta(\la_0z\!+\!k/d,\tau_0)\\
&=&
\textstyle
\frac{\la_0}{\sqrt{d}}\;e^{4i\pi d^2z^2}\;
f_{z}(2k),
\end{eqnarray*}
for all $k$ in $\m Z/d\m Z$. This proves \eqref{eqnfoubar2}.
\end{proof}


\begin{proof}[Proof of Theorem \ref{thmfoubar}]
It follows from Corollary \ref{corfoubar2},
that the $d$-critical value $\la_0:=\sqrt{a}+i\sqrt{b}$ is in $\mc B_d^o$.
Moreover, by choosing the parameter $r$ to be $0$ in 
Corollary \ref{corfoubar2}, the function $f$ satisfying 
\eqref{eqnbenoisto} with $\la=\la_0$ can be chosen to be symmetric
i.e. such that $f(k)=f(-k)$ for all $k$ in $\m Z/d\m Z$.

Therefore the complex conjugate function $\overline{f}$ which is $\overline{\la}_0$-critical 
also satisfies \eqref{eqnbenoisto} with $\la=\overline{\la}_0$. 
This prove that $\overline{\la}_0$ is also in $\mc B_d^o$.
\end{proof}

\section{Theta functions with characteristic}.
\label{secthecha}

In this chapter, we 
give a new proof of Proposition \ref{profzcri} 
by applying the same calculation as above to theta functions with characteristic. Most of the calculation relies only on the assumption 
that $a$ and $b$ are positive real numbers with $a\!+\!b=d$. 
The integrality condition is only needed 
at the end. 

\subsection{Reinterpreting the convolution equation}

The starting point is the  following Lemma \ref{lemcriff2}
which gives a reinterpretation of the conditions 
in \eqref{eqnbenoisto} thanks to the square $F=f^2$ of the function $f$.

\bl 
\label{lemcriff2}
Let $d$ be an odd integer,
$\la$, $\al$ be complex numbers, $f$  be a functions on $G=\m Z/d\m Z$
such that $\overline{\widehat{f}}(t)=\al\, f(t)$ and let $F=f^2$.
Then the following conditions are equivalent
\begin{equation}
\label{eqnfunequ}
f*f(2t)=\la \,f^2(t)
\hspace{1em}\Longleftrightarrow \hspace{1em}
\textstyle\overline{\widehat{F}}(2t)=\frac{\al^2\la}{\sqrt{d}}\, F(t)
\end{equation}
\el
In these equalities one must interpret $t$ as a variable
describing  $G$.

\begin{proof}[Proof of Lemma \ref{lemcriff2}]
We recall the formula which is  valid for any function $f$
\begin{equation}
\label{eqncriff2}
\overline{\widehat{F}}\;=\;
\tfrac{1}{\sqrt{d}}\;\overline{\widehat{f}}\!*\!\overline{\widehat{f}}.
\end{equation}
Since our function $f$ satisfies $\overline{\widehat{f}}=\al\, f$, this proves the claim \eqref{eqnfunequ}. 
\end{proof}

This lemma tells us that we need to compute the finite Fourier transform of the square $F_z$ of the theta functions $f_z$. This will be the aim of the next four sections.

\subsection{Square of theta functions} 

We need to introduce the theta 
functions with characteristic
\begin{eqnarray*}
\theta_{[0]}(z,\tau)\;\;=\;\;
\theta
\mbox{\scriptsize $\left[\!\!\!\begin{array}{c} 0 \\0\end{array}\!\!\!\right]$} (2z,2\tau)
&:=&\textstyle
\sum\limits_{m\,\rm even}e^{i\pi\frac{\tau}{2} m^2}e^{2i\pi mz}\\
\theta_{[1]}(z,\tau)\;=\;\theta
\mbox{\scriptsize $\left[\!\!\!\!
\begin{array}{c} 1/2\\0\end{array}\!\!\!\!\right]$} (2z,2\tau)
&:=&\textstyle
\sum\limits_{m\,\rm odd}e^{i\pi\frac{\tau}{2} m^2}e^{2i\pi mz}.
\end{eqnarray*}

Note that one has the equalities:
\begin{eqnarray}
\label{eqnth0th1}\textstyle
\theta_{[0]}(z,\tau)=\theta(2z,2\tau)
&{\rm and}&
\theta_{[1]}(z,\tau) =
e^{i\pi\tau/2}\;e^{2i\pi z}\;\theta_{[0]}(z\!+\!\tau/2,\tau).
\hspace*{2em}
\end{eqnarray}

We first recall that the square of the theta function is a linear combination of two theta functions with characteristic.

\bl 
\label{lemaddthe} For all $z$ in $\m C$, $\tau\in \m H$, one has
\begin{eqnarray}
\label{eqnaddthe}
\theta(z,\tau)^2&=&
\theta_{[0]}(0,\tau )\theta_{[0]}(z,\tau )+
\theta_{[1]}(0,\tau )\theta_{[1]}(z,\tau ).
\end{eqnarray}
\el 

\begin{proof} 
Just write the left-hand side as a double sum over $m$, $n$ in $\m Z$
and split this double sum according to the parity of $m\!-\! n$.
\end{proof}

The following calculation of the finite Fourier transform for these functions 
$\theta_{[0]}$ and $\theta_{[1]}$, is an analogue of Lemma \ref{lemfouthe}.

\bl 
\label{lemfout01}  For $w\in \m C$,  $\tau\in \m H$, $d>0$ odd integer
and $k\in \m Z/d\m Z$, one has
\begin{eqnarray*}
\label{eqnfout0}\textstyle
\sum\limits_{\ell \in \m Z/d\m Z}
e^{-4i\pi k \ell/d}\;\theta_{[0]}(w\!+\!\ell /d,\tau)
&=&
d\,e^{2i\pi k^2\tau}\,e^{4i\pi k w}\;\theta_{[0]}(dw+dk\tau,d^2\tau),\\
\textstyle
\sum\limits_{\ell \in \m Z/d\m Z}
e^{-4i\pi k \ell/d}\;\theta_{[1]}(w\!+\!\ell /d,\tau)
&=&
d\,e^{2i\pi k^2\tau}\,e^{4i\pi k w}\;\theta_{[1]}(dw+dk\tau,d^2\tau).
\end{eqnarray*}
\el

\begin{proof}
The proof is the same as for Lemma \ref{lemfouthe}, except that we have to restrict the sum to even integers $m$ for the first formula, and to odd integers $m$ for the second formula.
Here are the details that prove the first formula
\begin{eqnarray*} 
LHS&=&\textstyle\sum\limits_{\ell \in \m Z/d\m Z, m\,{\rm even}}
e^{i\pi\frac{\tau}{2} m^2}e^{2i\pi mw}e^{2i\pi \ell (m-2k)/d}\\
&=& \textstyle d \sum\limits_{n\,{\rm even}}
e^{i\pi \frac{\tau}{2} (2k+dn)^2}e^{2i\pi (2k+dn)w}
\; =\; RHS.
\end{eqnarray*}
And similarly for the second formula.
\end{proof}

\subsection{More transformation formulas }

We go on  with an analog of Corollary \ref{cortrathe}.

\begin{Cor}
\label{cortrat01}	Let  $d$ be an odd integer,  $a$,$b$ be positive real numbers with  $a\!+\!b\!=\!d$.
Let $\la_0:=\sqrt{a}\!+\!i\sqrt{b}$, and
$\displaystyle \tau_0:=\tfrac{\la_0^2-d^2}{4d^2}$.
Then, for all $z$ in $\m C$, one has 
\begin{eqnarray}
\label{eqntrat0}\textstyle
\theta_{[0]}(d\overline{\la}_0z,-d^2\overline{\tau}_0)
&=& \tfrac{\la_0}{d}\, e^{8i\pi d^2z^2}\;
\theta_{[0]}(\la_0z,\tau_0),\\
\label{eqntrat1}\textstyle
(-1)^{\frac{d^2-1}{8}} \;\theta_{[1]}(d\overline{\la}_0z,-d^2\overline{\tau}_0)
&=&\tfrac{\la_0}{d}\, e^{8i\pi d^2z^2}\;
\theta_{[1]}(\la_0z,\tau_0).
\end{eqnarray}
\end{Cor}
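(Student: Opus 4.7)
The plan is to reduce both identities to the Hecke transformation formula (Lemma \ref{lemtrathe}) applied to the ordinary theta function, via the two relations recorded in \eqref{eqnth0th1}:
\[
\theta_{[0]}(z,\tau) = \theta(2z, 2\tau)
\quad\text{and}\quad
\theta_{[1]}(z,\tau) = e^{i\pi\tau/2}\, e^{2i\pi z}\, \theta_{[0]}(z+\tau/2,\tau).
\]
For \eqref{eqntrat0} I mimic the proof of Corollary \ref{cortrathe}, replacing $\sigma_0$ by the matrix
\[
\sigma_0' \;=\; \mbox{\scriptsize$\left(\!\begin{array}{cc} d^2 & (d^2-1)/2 \\ 2 & 1\end{array}\!\right)$} \in \mathrm{SL}(2,\m Z).
\]
Since $d$ is odd, $d^2\equiv 1 \pmod 8$, so $(d^2-1)/2$ is even and $\sigma_0' \equiv \mathds{1} \pmod 2$. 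A direct calculation using $4\tau_0 + 1 = \lambda_0^2/d^2$ shows $\sigma_0'(2\tau_0) = -2d^2\overline\tau_0$. Applying Lemma \ref{lemtrathe} with $\tau=2\tau_0$ and $w=2\lambda_0 z$ (Jacobi symbol $(2/1) = 1$, $i^0 = 1$), the identity $\lambda_0\overline\lambda_0 = d$ yields $w/(\gamma\tau+\delta) = 2d\overline\lambda_0 z$ and $\gamma w^2/(\gamma\tau+\delta) = 8d^2 z^2$; rewriting in terms of $\theta_{[0]}$ produces \eqref{eqntrat0}.

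For \eqref{eqntrat1} I apply the second relation in \eqref{eqnth0th1} to both sides. On the left, it rewrites $\theta_{[1]}(d\overline\lambda_0 z,-d^2\overline\tau_0)$ via $\theta_{[0]}(d\overline\lambda_0 z', -d^2\overline\tau_0)$ with $z' := z - \lambda_0\overline\tau_0/2$; then \eqref{eqntrat0} applied to $z'$, together with $\lambda_0^2\overline\tau_0 = (1-\lambda_0^2)/4$, produces a $\theta_{[0]}(\lambda_0 z + (\lambda_0^2-1)/8,\tau_0)$. To match the $\theta_{[0]}(\lambda_0 z + \tau_0/2,\tau_0)$ that enters $\theta_{[1]}(\lambda_0 z,\tau_0)$ on the right of \eqref{eqntrat1}, I shift the argument by
\[
\frac{\lambda_0^2-1}{8} - \frac{\tau_0}{2} \;=\; \frac{(d^2-1)\lambda_0^2}{8\,d^2} \;=\; A\tau_0 + B/2,
\]
where $A = (d^2-1)/2$ and $B = (d^2-1)/4$; both are integers because $d^2\equiv 1 \pmod 8$, so this is a legitimate quasi-period of $\theta_{[0]}(\cdot,\tau_0) = \theta(2\,\cdot\,,2\tau_0)$.

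The only delicate step is then the phase bookkeeping. Collecting the Gaussian $e^{8i\pi d^2 z'^2}$ from \eqref{eqntrat0}, the quasi-periodicity factor $e^{-2i\pi A^2\tau_0 - 4i\pi A(\lambda_0 z + \tau_0/2)}$, and the two $e^{i\pi\tau/2}e^{2i\pi z}$ factors from \eqref{eqnth0th1}, and substituting $\tau_0 = \lambda_0^2/(4d^2) - 1/4$ and $\overline\tau_0 = 1/(4\lambda_0^2) - 1/4$, one checks that the $z$-linear terms cancel (using $d/\lambda_0 = \overline\lambda_0$) and so do the terms proportional to $\lambda_0^{\pm 2}$. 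What remains in the exponent, in units of $i\pi$, is exactly $8d^2 z^2 + d^2(d^2-1)/8$. Since $d^2$ is odd, $d^2(d^2-1)/8 \equiv (d^2-1)/8 \pmod 2$, producing precisely the sign $(-1)^{(d^2-1)/8}$ demanded by \eqref{eqntrat1}.
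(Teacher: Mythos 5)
Your proof is correct and follows essentially the same route as the paper: the identity \eqref{eqntrat0} is obtained from Hecke's formula with the very same matrix $\bigl(\begin{smallmatrix} d^2 & (d^2-1)/2 \\ 2 & 1\end{smallmatrix}\bigr)$ applied at $(2\lambda_0 z,\,2\tau_0)$, and \eqref{eqntrat1} is reduced to \eqref{eqntrat0} through the relation \eqref{eqnth0th1} on both sides. The only organizational difference is that the paper absorbs the half-period shift into the argument $z+\tfrac{\tau_0}{2\lambda_0}$ \emph{before} invoking \eqref{eqntrat0}, using that $d^2\tfrac{\overline{\tau}_0}{2}+\overline{\lambda}_0^2\tfrac{\tau_0}{2}=\tfrac{1-d^2}{8}$ is a genuine integer period (so no extra automorphy factor appears), whereas you invoke \eqref{eqntrat0} first and then correct by the quasi-period $A\tau_0+B/2$, whose automorphy factor your phase bookkeeping correctly accounts for.
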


\begin{proof}[Proof of Corollary \ref{cortrat01}]
We first prove \eqref{eqntrat0}. We apply Lemma \ref{lemtrathe} with the matrix 
$$\si_1 =\mbox{\scriptsize $\left(\!
\begin{array}{cc} d^2&\frac{d^2-1}{2} \\2&1\end{array}\!
\right)$} \equiv {\bf 1}\; {\rm mod}\; 2,
$$ 
with $w=2\la_0 z$ and with $\tau_1:=2\tau_0$. 
This matrix $\si_1$ has been chosen so that one has the equality 
$\si_1\tau_1 =- d^2\overline{\tau}_1$. 
Remembering that $\la_0\overline{\la}_0=d$ and that 
$1/(2\tau_1\!+\!1)=\overline{\la}_0^2$, this gives the equality
$$
\theta(2d\overline{\la}_0z,-2d^2\overline{\tau}_0)
\; =\; \tfrac{\la_0}{d}\, e^{8i\pi d^2z^2}\;
\theta(2\la_0z,2\tau_0),
$$
which is nothing but  \eqref{eqntrat0}.

We now explain how to deduce \eqref{eqntrat1} from \eqref{eqntrat0}.
We compute the left-hand side of \eqref{eqntrat1}
up to multiplicative factors $M_1$, $M_2$ and $M_3$ 
whose values are given below.
\begin{eqnarray*}
\hspace*{5em}LHS&=&(-1)^{\frac{d^2-1}{8}} \;\theta_{[1]}(d\overline{\la}_0z,-d^2\overline{\tau}_0)\\
&=& M_1\;\theta_{[0]}(d\overline{\la}_0z-d^2\tfrac{\overline{\tau}_0}{2},-d^2\overline{\tau}_0)
\hspace{5em} \mbox{\rm by \eqref{eqnth0th1}}\\
&=& M_1\;\theta_{[0]}(d\overline{\la}_0(z+\tfrac{{\tau}_0}{2\la_0}),-d^2\overline{\tau}_0)
\hspace{4.8em} \mbox{\rm by \eqref{eqndtodto}}\\
&=& M_1M_2\;\theta_{[0]}({\la}_0(z+\tfrac{{\tau}_0}{2\la_0}),{\tau}_0)
\hspace{5.9em} \mbox{\rm by \eqref{eqntrat0}}\\
&=& M_1M_2M_3\;\theta_{[1]}({\la}_0z,{\tau}_0).
\hspace{7.7em} \mbox{\rm by \eqref{eqnth0th1}}
\end{eqnarray*}
Note that in the third line of this computation, we used the periodicity 
of the function $\theta_{[0]}$ and  the fact that the sum
\begin{equation} 
\label{eqndtodto}
d^2\tfrac{\overline{\tau}_0}{2}+\overline{\la}^2_0\tfrac{{\tau}_0}{2}
\; =\;
\tfrac{1-d^2}{8} 
\end{equation}
is an integer.
The factors are given by
\begin{eqnarray*}
M_1&=&e^{i\pi \frac{1-d^2}{8}} \;e^{-i\pi d^2\frac{\overline{\tau}_0}{2}}\;
e^{2i\pi d\overline{\la}_0 z},\\
M_2&=&\tfrac{\la_0}{d}\, e^{8i\pi d^2(z\!+\!\tfrac{{\tau}_0}{2\la_0})^2},\\
M_3&=& e^{-i\pi \frac{\tau_0}{2}}\, e^{-2i \pi \la_0 z}.
\end{eqnarray*}
In the exponent of the product $M_1M_2M_3$ the constant terms are equal to
$$
\tfrac{i\pi}{2}\;( \tfrac{1-d^2}{4}-d^2\overline{\tau}_0+4\overline{\la}_0^2\tau_0^2-\tau_0)
\; =\;
\tfrac{i\pi\tau_0}{2}\;( \overline{\la}_0^2+4\overline{\la}_0^2\tau_0-1)
\; =\; 0,
$$
and the terms that are linear in $k$ are equal to 
$$
2i\pi\la_0 z \; (\overline{\la}_0^2+4\overline{\la}_0^2\tau_0-1)
\; =\; 0.
$$
Both of them disappear, and one gets
$LHS\; =\;
\textstyle
\frac{\la_0}{d}\;e^{8i\pi d^2z^2}\;
\theta_{[1]}({\la}_0z,{\tau}_0).$
\end{proof}

\subsection{More Fourier transform of theta functions }

We begin by an analog of Proposition \ref{profoubar2}.

\begin{Prop}
\label{profoubar3}	Let $G\!=\!\m Z/d\m Z$ with $d$ odd,  $a$,$b$ be positive real numbers with 	$a\!+\!b\!=\!d$.
Let $\la_0:=\sqrt{a}\!+\!i\sqrt{b}$,
$\displaystyle \tau_0:=\tfrac{\la_0^2-d^2}{4d^2}$ and $z\in \m C$.
Then\\
$(i)$ the function $f_{0,z}:\ell\mapsto \theta_{[0]}(\la_0z\!+\!\frac{d+1}{2d}\ell,\tau_0)$  on $G$ satisfies,
for all $k$ in $G$,
\begin{eqnarray}
\label{eqnfoubar30}
\textstyle
\hspace{4em}
\overline{\widehat{f}}_{0,\overline{z}}(2k)
&=&
\tfrac{\la_0}{\sqrt{d}} \; e^{8i\pi d^2z^2}\, f_{0,z}(k).
\end{eqnarray}
$(ii)$ the function $f_{1,z}:\ell\mapsto \theta_{[1]}(\la_0z\!+\!\frac{d+1}{2d}\ell,\tau_0)$  on $G$ satisfies,
for all $k$ in $G$,
\begin{eqnarray}
\label{eqnfoubar31}\textstyle
(-1)^{\frac{d^2-1}{8}}\;	\overline{\widehat{f}}_{1,\overline{z}}(2k)
&=&
\tfrac{\la_0}{\sqrt{d}} \; e^{8i\pi d^2z^2}\, f_{1,z}(k).
\end{eqnarray}
\end{Prop}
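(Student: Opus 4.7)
The plan is to follow step by step the strategy of the proof of Proposition \ref{profoubar2}, replacing Lemma \ref{lemfouthe} and Corollary \ref{cortrathe} by their half-integer analogues Lemma \ref{lemfout01} and Corollary \ref{cortrat01}. The algebraic backbone driving all cancellations remains the pair of identities $\la_0\overline{\la}_0 = d$ and $1/\la_0^2 = 1+4\overline{\tau}_0$.

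For part $(i)$, I would first replace $\ell$ by $2\ell$ in the Fourier sum defining $\widehat{f}_{0,\overline{z}}(2k)$; this is a bijection of $\m Z/d\m Z$ since $d$ is odd, and the $1$-periodicity of $\theta_{[0]}$ absorbs the resulting integer shift, casting the sum in the form handled by the first formula of Lemma \ref{lemfout01} applied with $w = \la_0\overline{z}$ and with $k$ replaced by $2k$. Taking complex conjugates, using $\overline{\theta_{[0]}(w,\tau)} = \theta_{[0]}(-\overline{w},-\overline{\tau})$ together with the evenness of $\theta_{[0]}$ in its first variable, one obtains a theta factor of the form $\theta_{[0]}(d\overline{\la}_0 z + 2dk\overline{\tau}_0, -d^2\overline{\tau}_0)$. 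The identity $2dk\overline{\tau}_0 = d\overline{\la}_0 \cdot \tfrac{k}{2\la_0 d} - \tfrac{dk}{2}$, a direct consequence of $1/\la_0^2 = 1+4\overline{\tau}_0$, combined with the $\tfrac{1}{2}$-periodicity of $\theta_{[0]}$, allows one to drop the $-dk/2$ term and to apply \eqref{eqntrat0} with $z$ replaced by $z + k/(2\la_0 d)$. A second $\tfrac{1}{2}$-periodicity step, using $\tfrac{k}{2d} + \tfrac{k}{2} = \tfrac{(d+1)k}{2d}$, converts the result into $f_{0,z}(k)$. Finally the accumulated exponentials simplify exactly as in Proposition \ref{profoubar2}: the linear-in-$k$ terms vanish because $d/\la_0 = \overline{\la}_0$, and the quadratic-in-$k$ terms collapse to $2i\pi k^2 \in 2i\pi\m Z$ because $1/\la_0^2 - 4\overline{\tau}_0 = 1$, leaving precisely the prefactor $\tfrac{\la_0}{\sqrt{d}}\,e^{8i\pi d^2 z^2}$ predicted by \eqref{eqnfoubar30}.

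For part $(ii)$ the calculation is identical, with the second formula of Lemma \ref{lemfout01} and \eqref{eqntrat1} replacing their $\theta_{[0]}$ counterparts. The only novelty is the relation $\theta_{[1]}(z+\tfrac{1}{2},\tau) = -\theta_{[1]}(z,\tau)$: each of the two half-periodicity reductions now produces a sign factor $(-1)^{k}$ (for the first one, using that $d$ is odd so that $(-1)^{dk}=(-1)^k$), and these two factors multiply to $1$. The global prefactor $(-1)^{(d^2-1)/8}$ appearing in \eqref{eqnfoubar31} is then simply inherited from the analogous factor in \eqref{eqntrat1}. The main (very mild) obstacle is therefore just the bookkeeping of these cancelling sign factors; otherwise the proof is mechanical, all the nontrivial analytic input having been prepared in advance in Lemma \ref{lemfout01} and Corollary \ref{cortrat01}.
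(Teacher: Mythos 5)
Your proof is correct and follows essentially the same route as the paper: Lemma \ref{lemfout01} plus Corollary \ref{cortrat01} plus the identical exponent bookkeeping based on $\la_0\overline{\la}_0=d$ and $1/\la_0^2=1+4\overline{\tau}_0$. The only (harmless) difference is that the paper evaluates the transform at $4k$ so that every theta-argument shift is an integer and concludes with $f_{0,z}(2k)$, whereas you evaluate at $2k$ directly and track the resulting half-integer shifts together with the two cancelling $(-1)^k$ signs for $\theta_{[1]}$; both computations check out.
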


\begin{proof}[Proof of Proposition \ref{profoubar3}]
The proof is the same as for Proposition \ref{profoubar2}.

We first prove \eqref{eqnfoubar30}.
For $k$ in $\m Z/d\m Z$, we compute  using Lemma \ref{lemfout01}  
\begin{eqnarray*} 
\widehat{f}_{0,\overline{z}}(4k)
&=&\textstyle
\frac{1}{\sqrt{d}}\sum\limits_{\ell \in \m Z/d\m Z}
e^{-16i\pi k \ell/d}\;
\theta_{[0]}(\la_0\overline{z}\!+\!\ell /d,\tau_0)\\
&=&
\sqrt{d}\,e^{32i\pi k^2\tau_0}\,e^{16i\pi k \la_0\overline{z}}\;\theta_{[0]}(d\la_0\overline{z}+4dk\tau_0,d^2\tau_0).
\end{eqnarray*}
And hence, applying first Equation \eqref{eqnlazlaz}  and then Corollary \ref{cortrat01}, one gets
\begin{eqnarray*} 
\overline{\widehat{f}}_{0,\overline{z}}(4k)
&=&
\sqrt{d}\,e^{-32i\pi k^2\overline{\tau}_0}\,e^{-16i\pi k \overline{\la}_0z}\;\theta_{[0]}(d\overline{\la}_0 
(z\!+\!\tfrac{k}{\la_0 d}),-d^2\overline{\tau}_0)\\
&=&\textstyle
\frac{\la_0}{\sqrt{d}}\,e^{-32i\pi k^2\overline{\tau}_0}\,e^{-16i\pi k \overline{\la}_0z}\;e^{8i\pi d^2(z\!+\!\tfrac{k}{\la_0 d})^2}\;
\theta_{[0]}(\la_0z\!+\!k/d,\tau_0).
\end{eqnarray*}
In the exponent the terms that are linear in $k$ are equal to
$$
16i\pi kz\,(d/\la_0-\overline{\la}_0)\;=\; 0,
$$
and the terms that are quadratic in $k$ are equal to 
$$
8i\pi k^2\,(1/\la_0^2-4\overline{\tau}_0)=8i\pi k^2 \in 2i\pi \m Z.
$$
Both of them disappear, and one gets
$\overline{\widehat{f}}_{0,\overline{z}}(4k)=
\frac{\la_0}{\sqrt{d}}\;e^{8i\pi d^2z^2}\;
f_{0,z}(2k).$

The proof of \eqref{eqnfoubar31} is similar, 
the sign
$(-1)^{\frac{d^2-1}{8}}$ sign coming from
\eqref{eqntrat1}.
\end{proof}

\subsection{Fourier transform of squares of theta functions}

We have not yet assumed that the positive real number $a$ is an integer
equal to $\tfrac{(d+1)^2}{4}$ modulo $4$. 
This condition will be crucial in the next Lemma \ref{lemfoubar4}.

\begin{Prop}
\label{profoubar4}
Let $G\!=\!\m Z/d\m Z$ with $d$ odd,  $a$,$b$ be positive integers with 
$a\!+\!b\!=\!d$ and $a\!\equiv\!\frac{(d+1)^2}{4}\;{\rm  mod}\; 4$.
Let $\la_0:=\sqrt{a}\!+\!i\sqrt{b}$,
$\displaystyle \tau_0:=\tfrac{\la_0^2-d^2}{4d^2}.$
The square function $F_{z}:\ell\mapsto \theta(\la_0z\!+\!\frac{d+1}{2d}\ell,\tau_0)^2$ on $G$ satisfies,
for all $k$ in $G$,
\begin{equation}\label{eqnfoubar4}
\textstyle
\overline{\widehat{F}}_{\overline{z}}(2k)=\frac{\la_0^3}{d\sqrt{d}} \,e^{8i\pi d^2z^2} \, F_{z}(k).
\end{equation}
\end{Prop}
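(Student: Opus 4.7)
The plan is to reduce the statement to Proposition \ref{profoubar3} by expanding $\theta^2$ into theta functions with characteristic, thereby transporting the Fourier-transform identity from $f_{0,z}$ and $f_{1,z}$ to $F_z$. Set $A_0:=\theta_{[0]}(0,\tau_0)$ and $A_1:=\theta_{[1]}(0,\tau_0)$. Applying Lemma \ref{lemaddthe} at $w=\la_0 z+\tfrac{d+1}{2d}\ell$ gives
$$F_{z}(\ell)=A_0\,f_{0,z}(\ell)+A_1\,f_{1,z}(\ell),$$
and, since the conjugate Fourier transform is antilinear in $f$, one obtains
$$\overline{\widehat{F}}_{\overline{z}}=\overline{A_0}\,\overline{\widehat{f}}_{0,\overline{z}}+\overline{A_1}\,\overline{\widehat{f}}_{1,\overline{z}}.$$

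Next, Proposition \ref{profoubar3} applied to each summand yields
$$\overline{\widehat{F}}_{\overline{z}}(2k)=\tfrac{\la_0}{\sqrt{d}}\,e^{8i\pi d^2z^2}\!\left(\overline{A_0}\,f_{0,z}(k)+(-1)^{\frac{d^2-1}{8}}\overline{A_1}\,f_{1,z}(k)\right).$$
Comparing with the target right-hand side $\tfrac{\la_0^3}{d\sqrt{d}}\,e^{8i\pi d^2z^2}\bigl(A_0\, f_{0,z}(k)+A_1\, f_{1,z}(k)\bigr)$ and matching the coefficients of $f_{0,z}$ and $f_{1,z}$, it suffices to prove the two scalar identities
$$\overline{A_0}=\tfrac{\la_0^2}{d}\,A_0,\qquad (-1)^{\frac{d^2-1}{8}}\,\overline{A_1}=\tfrac{\la_0^2}{d}\,A_1. \qquad(\star)$$

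The main obstacle is precisely $(\star)$: this is where the integrality of $a$ and the congruence $a\equiv\tfrac{(d+1)^2}{4}\;{\rm mod}\;4$ must enter, since everything up to this point used only that $a,b>0$ are real with $a+b=d$. These identities should constitute the content of the forthcoming Lemma \ref{lemfoubar4}. A plausible route to them is to first evaluate Corollary \ref{cortrat01} at $z=0$, giving $\theta_{[0]}(0,-d^2\overline{\tau}_0)=\tfrac{\la_0}{d}A_0$ together with a sign-corrected analogue for $A_1$, and then compose with a second ${\rm SL}(2,\m Z)$ transformation relating $-d^2\overline{\tau}_0$ to $-\overline{\tau}_0$, using that direct complex conjugation of the defining series of $\theta_{[i]}$ yields $\overline{A_i}=\theta_{[i]}(0,-\overline{\tau}_0)$. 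The arithmetic condition on $a$ should precisely guarantee that the auxiliary matrix lies in the principal congruence subgroup mod $2$, producing the factor $\tfrac{\la_0^2}{d}$ and the Jacobi-symbol sign $(-1)^{(d^2-1)/8}$ that distinguishes the two equations in $(\star)$.
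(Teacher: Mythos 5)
Your reduction is exactly the paper's: decompose $F_z$ by Lemma \ref{lemaddthe}, use antilinearity of $f\mapsto\overline{\widehat{f}}$ together with $\overline{A_i}=\theta_{[i]}(0,-\ol{\tau}_0)$, apply Proposition \ref{profoubar3} termwise, and observe that the whole statement collapses to the two scalar identities $(\star)$, which are precisely \eqref{eqnth0bar} and \eqref{eqnth1bar} of Lemma \ref{lemfoubar4}. You also correctly isolate $(\star)$ as the only place where integrality and the congruence on $a$ can enter. Up to that point the argument is complete and correct.

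The genuine gap is that $(\star)$ is not proved, and the mechanism you sketch for it is off. The paper's Lemma \ref{lemfoubar4} does start from Corollary \ref{cortrat01} at $z=0$, applied twice for each characteristic (once as stated, once after complex conjugation, which converts $\theta_{[i]}(0,-d^2\ol{\tau}_0)=\tfrac{\la_0}{d}A_i$ into a relation for $\theta_{[i]}(0,d^2\tau_0)$ in terms of $\overline{A_i}$); but the bridge between $\theta_{[i]}(0,d^2\tau_0)$ and $\theta_{[i]}(0,-d^2\ol{\tau}_0)$ is not a second ${\rm SL}(2,\m Z)$ transformation. It is the translation by the integer $d^2\tau_0+d^2\ol{\tau}_0=\tfrac{a-b-d^2}{2}$, under which $\theta_{[0]}(0,\cdot)$ is invariant and $\theta_{[1]}(0,\cdot)$ picks up a sign depending on the translation modulo $4$. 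That integer is automatically even (because $a+b=d$ is odd), so there is no issue of forcing an auxiliary matrix into the principal congruence subgroup mod $2$: that part is free, and the $\theta_{[0]}$ identity in $(\star)$ needs only integrality of $a$, not the congruence. What the hypothesis $a\equiv\tfrac{(d+1)^2}{4}\;{\rm mod}\;4$ actually buys is \eqref{eqnd2td2t}, i.e.\ the residue of $\tfrac{a-b-d^2}{2}$ modulo $4$, and this is exactly what produces and fixes the sign $(-1)^{(d^2-1)/8}$ in the $\theta_{[1]}$ identity. As written, your sketch would not determine that sign, and it is precisely that sign which cancels against the one in \eqref{eqnfoubar31} so that the two contributions recombine into $F_z(k)$.
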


The following lemma will be useful.

\begin{Lem}
\label{lemfoubar4}
Let $G\!=\!\m Z/d\m Z$ with $d$ odd,  $a$,$b$ be positive integers with 
$a\!+\!b\!=\!d$ and $a\!\equiv\!\frac{(d+1)^2}{4}\;{\rm  mod}\; 4$.
Let $\la_0:=\sqrt{a}\!+\!i\sqrt{b}$,
$\displaystyle \tau_0:=\tfrac{\la_0^2-d^2}{4d^2}.$ Then, one has
\begin{eqnarray}
\label{eqnth0bar}
\overline{\la}_0\,\theta_{[0]}(0,-\overline{\tau}_0)
&=&\la_0\,\theta_{[0]}(0,\tau_0)\\
\label{eqnth1bar}
(-1)^{\frac{d^2-1}{8}}\;\overline{\la}_0\,\theta_{[1]}(0,-\overline{\tau}_0)
&=&\la_0\,\theta_{[1]}(0,\tau_0).
\end{eqnarray}
\end{Lem}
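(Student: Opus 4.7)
The plan is to deduce the lemma from Corollary \ref{cortrat01} (specialised at $z=0$), from its complex conjugate, and from the elementary quasi-periodicity of $\theta_{[0]}(0,\cdot)$ and $\theta_{[1]}(0,\cdot)$ under shifts of $\tau$ by a real integer. The bridge between these ingredients is the real number
$$
N\;:=\;d^2(\tau_0+\overline{\tau}_0)\;=\;\tfrac{a-b-d^2}{2},
$$
which is an integer, since $a-b$ and $d^2$ are both odd (as $a+b=d$ is odd and $a,b\in \m Z$).

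First I would specialise Corollary \ref{cortrat01} at $z=0$ to get
$\theta_{[0]}(0,-d^2\overline{\tau}_0)=\tfrac{\la_0}{d}\,\theta_{[0]}(0,\tau_0)$
and
$(-1)^{(d^2-1)/8}\theta_{[1]}(0,-d^2\overline{\tau}_0)=\tfrac{\la_0}{d}\,\theta_{[1]}(0,\tau_0)$.
Conjugating these identities, using the obvious symmetry $\overline{\theta_{[\eps]}(0,\tau)}=\theta_{[\eps]}(0,-\overline{\tau})$ read off the defining series, together with $\overline{\la}_0=d/\la_0$, gives companion identities expressing $\theta_{[\eps]}(0,d^2\tau_0)$ in terms of $\theta_{[\eps]}(0,-\overline{\tau}_0)$.

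Next, since $d^2\tau_0=-d^2\overline{\tau}_0+N$ with $N\in \m Z$, I would compare $\theta_{[\eps]}(0,d^2\tau_0)$ with $\theta_{[\eps]}(0,-d^2\overline{\tau}_0)$. Inspecting the defining series, one has $\theta_{[0]}(0,\tau+N)=\theta_{[0]}(0,\tau)$ for every integer $N$ (the exponents run over $2k^2$, which is an integer); whereas for $\theta_{[1]}$ the sum is over $m$ odd with $m^2\equiv 1\pmod 8$, so $\theta_{[1]}(0,\tau+N)=i^N\theta_{[1]}(0,\tau)$. The $\theta_{[0]}$ case immediately combines with its conjugated identity to give \eqref{eqnth0bar}, while the $\theta_{[1]}$ case gives \eqref{eqnth1bar} provided $i^N=(-1)^{(d^2-1)/8}$.

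The main obstacle---and the only place where the arithmetic hypothesis $a\equiv\tfrac{(d+1)^2}{4}\pmod 4$ enters---is checking this last sign identity. Setting $m=(d+1)/2$ one has $(d^2-1)/8=m(m-1)/2$ and $N=a-dm$. The hypothesis $a\equiv m^2\pmod 4$ then gives $N\equiv m^2-(2m-1)m=m(1-m)\pmod 4$, so $i^N=(-1)^{N/2}=(-1)^{m(m-1)/2}=(-1)^{(d^2-1)/8}$, as required. Thus the integrality and quartic condition on $a$ cannot be dispensed with in Lemma \ref{lemfoubar4}, even though they played no role in Corollary \ref{cortrat01}.
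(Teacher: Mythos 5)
Your proposal is correct and follows essentially the same route as the paper: specialise Corollary \ref{cortrat01} at $z=0$, conjugate it using $\overline{\theta_{[\eps]}(0,\tau)}=\theta_{[\eps]}(0,-\overline{\tau})$ and $\la_0\overline{\la}_0=d$, and bridge $d^2\tau_0$ with $-d^2\overline{\tau}_0$ via the integer shift $N=d^2(\tau_0+\overline{\tau}_0)=\tfrac{a-b-d^2}{2}$ together with the (quasi-)periodicity of $\theta_{[0]}$ and $\theta_{[1]}$ in $\tau$. Your explicit verification that $i^N=(-1)^{(d^2-1)/8}$ is exactly the content of the paper's congruence \eqref{eqnd2td2t}, which the paper asserts without detail, so your write-up is if anything slightly more complete.
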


Assertion \eqref{eqnth0bar} means that
$\la_0\,\theta_{[0]}(0,\tau_0)$ is real. 
Assertion \eqref{eqnth1bar} means that
$\la_0\,\theta_{[1]}(0,\tau_0)$ is real when 
$d\!\equiv\!\pm 1$ mod $8$ and imaginary otherwise.

\begin{proof}[Proof of Lemma \ref{lemfoubar4}]
The congruence assumption on $a$ tells us that 
\begin{equation}
\label{eqnd2td2t}
d^2\tau_0+d^2\ol{\tau}_0\;=\; \tfrac{a-b-d^2}{2}
\; \;\mbox{\rm belongs to}\;\; 
\tfrac{d^2-1}{4}\! +\!4\m Z.
\end{equation}
Combining this with the equality $\theta_{[0]}(0,\tau+2)=\theta_{[0]}(0,\tau)$ 
and using twice Formula \eqref{eqntrat0} one gets
$$
\tfrac{\overline{\la}_0}{d}\,\theta_{[0]}(0,-\overline{\tau}_0)
\;=\; 
\theta_{[0]}(0,d^2\tau_0)
\;=\; 
\theta_{[0]}(0,-d^2\overline{\tau}_0)
\;=\; 
\tfrac{\la_0}{d}\,\theta_{[0]}(0,\tau_0)
$$
which proves \eqref{eqnth0bar}.

Similarly with \eqref{eqnd2td2t} and the equality $\theta_{[1]}(0,\tau+2)=-\theta_{[1]}(0,\tau)$, 
one has the following computation in which we
use twice Formula  \eqref{eqntrat1},
$$
(-\!1)^{\frac{d^2\!-\!1}{8}}\tfrac{\overline{\la}_0}{d}\,\theta_{[1]}(0,-\overline{\tau}_0)
=
\theta_{[1]}(0,d^2\tau_0)
=
(-\!1)^{\frac{d^2\!-\!1}{8}}\theta_{[1]}(0,-d^2\overline{\tau}_0)
=
\tfrac{\la_0}{d}\theta_{[1]}(0,\tau_0)
$$
which proves \eqref{eqnth1bar}. 
\end{proof}

\begin{proof}[Proof of Proposition \ref{profoubar4}]
According to Lemma \ref{lemaddthe}, one has, for 
$k$ in $G$,
\begin{eqnarray}
F_z(k)&=& 	
\theta_{[0]}(0,\tau_0)\, f_{0,z}(k)+
\theta_{[1]}(0,\tau_0)\, f_{1,z}(k).
\end{eqnarray}
Therefore we compute the conjugate Fourier transform
\begin{eqnarray*}
\overline{\widehat{F}}_z(2k)&=& 	
\theta_{[0]}(0,-\overline{\tau}_0) \overline{\widehat{f}}_{0,z}(2k)+
\theta_{[1]}(0,-\overline{\tau}_0) \overline{\widehat{f}}_{1,z}(2k)\\
&=&
\tfrac{\la_0^2}{d}\left(
\theta_{[0]}(0,\tau_0) \overline{\widehat{f}}_{0,z}(2k)+
(-1)^{\frac{d^2-1}{8}}\;\theta_{[1]}(0,\tau_0) \overline{\widehat{f}}_{1,z}(2k)\right)\\
&=&
\tfrac{\la_0^3}{d\sqrt{d}}\,e^{8i\pi d^2z^2}\left(
\theta_{[0]}(0,\tau_0)f_{0,z}(k)+
\theta_{[1]}(0,\tau_0)f_{1,z}(k)\right)\\
&=&
\tfrac{\la_0^3}{d\sqrt{d}}\; e^{8i\pi d^2z^2}\,F_z(k).
\end{eqnarray*}
where we used 
Formulas 
\eqref{eqnth0bar} and \eqref{eqnth1bar}, and  Formulas
\eqref{eqnfoubar30} and \eqref{eqnfoubar31}.
\end{proof}

\subsection{Another proof of Proposition \ref{profzcri}}

\begin{proof}
By
Propositions \ref{profoubar2} and \ref{profoubar4}, 
we know that, for all $k$ in $G$,
\begin{eqnarray*}
\overline{\widehat{f}}_{\overline{z}}(k)
&=&
\al\, f_{z}(k)
\;\;{\rm with}\;\; \al := \tfrac{\la_0}{\sqrt{d}} \; e^{4i\pi d^2z^2},\\
\overline{\widehat{F}}_{\overline{z}}(2k)
&=&\be\, F_{z}(k)
\;\;{\rm with}\;\; 
\be := \tfrac{\la_0^3}{d\sqrt{d}} \; e^{8i\pi d^2z^2}.
\end{eqnarray*}
Therefore, by Formula \eqref{eqncriff2}, we get, for all $k$ in $G$,
\begin{eqnarray*}
f_z\!*\!f_z(2k)
&=&
\la\, f_z^2(k)
\;\;{\rm with}\;\; 
\la=\tfrac{\be\sqrt{d}}{\al^2}=\la_0.
\end{eqnarray*}
This says that the function $f_z$ is $\la_0$-critical.
\end{proof}

\section{Properties of the elements of $\mc B_d$}
\label{secproobd}

In this last chapter,  we explain the relationship between the set 
$\mc B^o_d$, the Gaussian functions, the Dirichlet characters, the Jacobi sums and the Weil numbers. We also update in section \ref{secliscri} the list of critical values for $d\leq 17$ that is given in \cite{CSAGI}. We will see in Section \ref{secnonwei} that this 
updated list for $d=17$ contains elements $\la\in \mc B^o_d$  that are not Weil numbers.

\subsection{Gaussian functions}
\label{secgaufun}

Let $d$ be an odd integer, $\ze:=e^{2i\pi/d}$ and $\eta:=-e^{i\pi/d}$,
so that $\eta^2=\ze$. Set $g_d$ to be the classical Gauss sum $g_d:=\sum_{k}\ze^{k^2}$. So that one has $g_d=\sqrt{d}$ when $d\equiv 1$ mod $4$,
and $g_d=i\sqrt{d}$ when $d\equiv 3$ mod $4$.
 
For $u\in (\m Z/d\m Z)^*$, $v\in \m Z/d\m Z$ we introduce 
the Gaussian function on $\m Z/d\m Z$
$$
f_{u,v}
(k)=\eta^{-u(k-v)^2}.
$$
The conjugate of its Fourier transform is given by
\begin{equation} 
\label{eqngaufun}
\overline{\widehat{f}}_{u,v} = \; (\!\tfrac{2u}{d}\!)\,\tfrac{\overline{g}_d}{\sqrt{d}}\, f_{u^{-1},uv}
\end{equation}

\bl 
\label{lemgaufun} 
$a)$ For $d$ odd, the classical Gauss sum $\la_0=g_d$ belongs to $\mc B_d^o$.\\
$b)$ When $d\equiv 1$ mod $4$, the opposite value $\la_0=-\sqrt{d}$ also belongs to $\mc B_d$.\\
$c)$ When $d\equiv 3$ mod $4$, the opposite value $\la_0=-i\sqrt{d}$ also belongs to $\mc B_d^o$ .
\el

\begin{proof}
For  $f=f_{u,v}$, one computes 
\begin{eqnarray*}
f\!*\! f(2k)&=&\la\, f^2(k)
\;\;{\rm with}\;\; 
	\la=(\!\tfrac{u}{d}\!)\, g_d.
\end{eqnarray*}
This says that the function $f$ is $\la_0$-critical
with $\la_0=(\!\tfrac{u}{d}\!)\, g_d$.

$a)$ When $u=1$ and $v=0$, one has $\la_0=g_d$, and Equation 
\eqref{eqngaufun},
tells us that the $\la_0$-critical function $f:k\mapsto \eta^{-k^2}$ is proportional to its conjugate Fourier transform $\overline{\widehat{f}}$.

$b)$ When  $u\in \m Z/d\m Z$ is not a square and $v=0$, 
one has $\la_0=-g_d$, and the function $f:k\mapsto \eta^{-uk^2}$ 
is $\la_0$-critical. 
Equation \eqref{eqngaufun},
tells us that the   conjugate Fourier transform $\overline{\widehat{f}}$
is proportional to $k\mapsto f(u^{-1}k)$.

$c)$ When $d\equiv 3$ mod $4$ and $u=-1$, one has $\la_0=-i\sqrt{d}$, and Equation 
\eqref{eqngaufun},
tells us that the $\la_0$-critical function $f:k\mapsto \eta^{k^2}$ is proportional to its conjugate Fourier transform $\overline{\widehat{f}}$.
\end{proof}

\br 
For $d=5$ the critical value $\la_0=-\sqrt{5}$
does not belong to $\mc B^o_d$. Indeed 
a direct calculation shows that, up to scalar,  
there are exactly $10$ 
$\la_0$-critical functions. These are the gaussian functions
$f_{u,v}$, with $u=\pm 2$ and $v\in \m Z/5\m Z$. 
By \eqref{eqngaufun}, none of them is proportional to its conjugate Fourier transform.
\er

\subsection{Dirichlet characters and Jacobi sums}
\label{secdircha}

We refer to \cite[\S 3.5]{IwaniecKowalski} and 
to \cite{Conrad} for the results of this section.

We recall that a Dirichlet character
$\chi:\m Z/d\m Z\ra \m C$ is a multiplicative character 
on the multiplicative group $(\m Z/d\m Z)^\star$ which is extended by $0$
on the set of zero divisors of $\m Z/d\m Z$. The group of Dirichlet characters
is isomorphic to $(\m Z/d\m Z)^\star$. Its order is given by the Euler function $\ph(d)$.
A Dirichlet character of $\m Z/d\m Z$ is said to be primitive if does not come from
a Dirichlet character of $\m Z/d'\m Z$ for a proper divisor $d'$ of $d$.
The number $N(d)$ of primitive Dirichlet characters 
can be easily computed by the formulas:\\
$N(d_1d_2)=N(d_1)N(d_2)$ when $d_1$ is coprime to $d_2$.\\
$N(p)=p\!-\!2$ and $N(p^r)=(p-1)^2p^{r-2}$ when $p$ is prime and $r\geq 2$.\\
In particular primitive Dirichlet characters exist if only if and only if 
$d\not\equiv 2$ mod $4$.

To Dirichlet characters $\chi$, $\chi_1$, $\chi_2$ 
one associates the Gauss sum 
$$
G(\chi)=\textstyle\sum_{k} e^{2i\pi k/d}\chi(k)
$$ 
and the Jacobi sum 
$$
J(\chi_1,\chi_2)=\textstyle\sum_{k}\chi_1(k)\chi_2(1-k).
$$

\bl
\label{lemdircha}
For all Dirichlet character $\chi$ whose square $\chi^2$ is primitive, 
the value $\la_\chi:=\chi(4)\, J(\chi,\chi)$ belongs to $\mc B_d^o$. 
Moreover the Dirichlet character $\chi$ is a 
$\la_\chi$-critical function on $\m Z/d\m Z$ which is proportionnal to 
$\overline{\widehat{\chi}}$. 
\el

\br 
The number $N_0(d)$ of Dirichlet characters whose square is primitive 
can be easily computed by the formulas:\\
$N_0(d_1d_2)=N_0(d_1)N_0(d_2)$ when $d_1$ is coprime to $d_2$.\\
$N_0(p)=p\!-\!3$ and $N_0(p^r)=(p-1)^2p^{r-2}$ when $p$ is an odd prime, 
and $r\geq 2$.\\
In particular, for $d$ odd, Dirichlet characters with primitive square exist if only if and only if 
$d\not\equiv \pm 3 $ mod $9$.
\er

\begin{proof} 
The Fourier transform of a primitive Dirichlet character $\chi$ 
is proportional to the conjugate Dirichlet character
$$
\widehat{\chi}(k)=\tfrac{1}{\sqrt{d}}\sum_k e^{-2i\pi k\ell/d}\chi(\ell)
=\tfrac{\chi(-1)}{\sqrt{d}}G(\chi)\;\overline{\chi}(k).
$$
Hence, since both $\chi$ and $\chi^2$ are primitive on $\m Z/d\m Z$, one has
\begin{eqnarray*}
\overline{\widehat{\chi}}(k)
&=&
\al\, \chi(k)
\;\;{\rm with}\;\; \al := \tfrac{1}{\sqrt{d}} \;G(\overline{\chi}),\\
\overline{\widehat{\chi^2}}(2k)
&=&\be\, \chi^2(k)
\;\;{\rm with}\;\; 
\be := \tfrac{\chi(4)}{\sqrt{d}} \; G(\overline{\chi}^2).
\end{eqnarray*}
Therefore, by Formula \eqref{eqncriff2}, we get, for all $k$ in $G$,
\begin{equation}
\label{eqnchichichi}
\chi\!*\!\chi(2k)
=
\la\, \chi^2(k)
\;\;\;{\rm with}\;\;\; 
\la=\tfrac{\be\sqrt{d}}{\al^2}=
\tfrac{d\chi(4)\,G(\overline{\chi}^2)}{G(\overline{\chi})^2}=
\tfrac{\chi(4)\,G(\chi)^2}{G(\chi^2)}.
\end{equation}
This says that the function $\chi$ is $\la$-critical
and that $\la$ belongs to $\mc B_d^o$.
Evaluating Formula \eqref{eqnchichichi} at $k=1$ also gives
the equality
$\la=\chi(4)\, J(\chi,\chi)$.
\end{proof}	

Here are the values $\la_\chi$ obtained when $d\leq 17$.
Note that two different Dirichlet characters $\chi$ may have the same $\la_\chi$.

\noindent 
$\star$ $d=5.$ $N_0(d)=2$ and $\la_\chi=1\!\pm\! 2i$.\\
$\star$ $d=7.$ $N_0(d)=4$ and $\la_\chi=\pm 2\!\pm\! i\sqrt{3}$.\\
$\star$ $d=9.$ $N_0(d)=4$ and $\la_\chi=3$.\\
$\star$ $d=11.$ $N_0(d)=8$ and 
$\la_\chi=\pm (1\!+\!\eps\sqrt{5})\pm i\sqrt{5\!-\!2\eps \sqrt{5}}$
with $\eps=\pm 1$.\\
$\star$ $d=13.$ $N_0(d)=10$ and $\la_\chi=\pm 3\!\pm\! 2i$
and $-1\!\pm\! 2i\sqrt{3}$.\\
$\star$ $d=17.$ $N_0(d)=14$ and $\la_\chi= 3\!\pm\! 2i\sqrt{2}$
and $-1\!\pm\! 4i$ and\\
\hspace*{11.5em}$\la_\chi=-1\!+\!2\eps\sqrt{2}\pm 2i\sqrt{2\!+\!\eps \sqrt{2}}$
with $\eps=\pm 1$.

\subsection{Weil numbers} 
\label{secweinum}

We focus on the arithmetic properties of critical values.
Let $d$ be an odd integer and $G=\m Z/d\m Z$. 
We recall that a $d$-Weil number is an algebraic integer 
all of whose Galois conjugates have absolute value $\sqrt{d}$.

All the elements of $\mc B_d$ we have constructed so far belong to 
a cyclotomic field.  This is not always the case. Indeed, one can check
that the element $\la=1\!+\!\sqrt{5}+i\sqrt{9\!-\!2\sqrt{5}}$ 
belongs to	$\mc B_{15}$.
We may ask, for any odd integer $d$,
\vs 

\centerline{
Is every  $\la\in \mc B_d$ a $d$-Weil number?} 
\vs 

We will give a very partial positive answer to this question in 
Lemma \ref{lemweinum}
but we will exhibit an example of  $\la$ in $\mc B^o_d$  
which is not a Weil numbers in  Section \ref{secnonwei}. 
In this example, one has $d=17$.

Let $G$ be a finite abelian group of odd order $d$.
For any symmetric non degenerate pairing ${\bf e}(.,.):G\times G\ra \m S^1$,
where $\m S^1:=\{z\in \m C\mid |z|=1\}$,
one can define the conjugate Fourier transform $f\mapsto \overline{\widehat{f}}$ on
$\mc F_G$ by, for all  $x\in G$,
$$
\textstyle
\overline{\widehat{f}}(x)= \frac{1}{\sqrt{d}}
\textstyle\sum\limits_{y\in G}\overline{f(y)}\; {\bf e}(x,y).
$$
Since the pairing is symmetric and non-degenerate, this map $f\mapsto \overline{\widehat{f}}$ is again  an antilinear involution of $\mc F_G$.
Moreover for all $\la$  with $|\la|=\sqrt{d}$ the conjugate Fourier transform preserves the variety $\mc F_\la$ of $\la$-critical functions.

Therefore it is also natural to introduce the following finite set of numbers
\begin{eqnarray*} 
	\label{eqnbenoistg} 
	\mc B_G:&=&\{ \mbox{$\la\in \m C \mid$
		there exists a symmetric non-degenerate pairing on $G$}\\ 
	&&\hspace{5em}\mbox{and a $\la$-critical function $f$ on $G$
		such that $\overline{\widehat{f}}=f$}\}.
\end{eqnarray*} 
By definition, when $G=\m Z/d\m Z$, we have $\mc B_d=\mc B_G$.
Lemma \ref{lemweinum}
below gives some very partial insight on the set 
$\mc B_G$.

\bl
\label{lemweinum}
Let $G$ be a finite abelian group of odd order $d$.\\ 
$(i)$ If  $\la\in \mc B_G$ belongs to a CM-field then $\la$ is a $d$-Weil number.\\
$(ii)$ Moreover if the function $f$ satisfying \eqref{eqnbenoisto} can also be chosen in a CM-field then
all the Galois conjugates of $\la$ belong to $\mc B_G$.
\el

By definition, a CM-field is a totally complex quadratic extension of a totally real number field.

\begin{proof} 
	$(i)$ We first recall that, by \cite[Prop. 2.1]{CSAGI}, all the critical numbers
	$\la\in \mc C_G$ are algebraic integers.
	On the one hand, for each choice of symmetric non-degenerate pairing ${\bf e}(.,.)$, 
	the conjugate Fourier transform $f\mapsto \overline{\widehat{f}}$ is an isometry of 
	$\mc F_G$ endowed with the $L^2$-norm. Therefore all the numbers $\la$ 
	in $\mc B_G$ have absolute value equal to $\sqrt{d}$. 
	On the other hand for $\la$ in a CM-number field, all the Galois conjugates of $\la$ have the same absolute value.
	This proves that $\la$ is a $d$-Weil number.
	
	$(ii)$ Let $\la'$ be a Galois conjugate of $\la$.
	Let $K\subset \m C$ be a CM number field containing not only $\la$ and the $d^{\rm th}$-roots of unity but also the coefficients of a function $f$
	satisfying \eqref{eqnbenoisto}. We can assume that the extension $K/\m Q$ is Galois.
	Let $\si\in {\rm Gal}(K/\m Q)$ be an automorphism of $K$ such  that $\la'=\si(\la)$.

	Since $K$ is a CM number field, this element $\si$
	commutes with the complex conjugation.
	There exists an integer $m$ coprime to $d$ such that, for all $d^{\rm th}$-root of unity $\zeta$, one has $\si(\zeta)=\zeta^m$. 
	We then introduce the symmetric non-degenerate pairing on $G$
	$$
	{\bf e}_m(x,y)={\bf e}(x,y)^m={\bf e}(mx,y)={\bf e}(x,my)=\si({\bf e}(x,y))
	$$
	By assumption there exists a non-zero $\la$-critical function $f$ on $G$ and a constant $\al$ such that, 
	for all  $x\in G$,
	$$
	\textstyle
	\sum\limits_{y\in G}\overline{f(y)}\;{\bf e}(x,y) =\alpha f(x)
	$$
	Since $\si$ commutes with the complex conjugation, applying $\si$ to this equality, one gets, with the function $g:=f^\si$ which is $\la^\si$-critical,
	the equality
	$$
	\textstyle
	\sum\limits_{y\in G}\overline{g(y)}\;{\bf e_m}(x,y) =\alpha^\si g(x)
	$$
	This proves that $\la^\si$ belongs to $\mc B_G$.
\end{proof}	


\subsection{List of critical values for small $d$} 
\label{secliscri}

\noindent 
The following lists of $d$-critical values rely on numerical experiments using the Buchberger's
algorithm for computing Groebner basis (see \cite[Chap. 2]{CLO92}). Implementing the algorithm modulo many prime numbers allowed us to improve the lists given in \cite{CSAGI}.
\vs

For $3\leq d\leq 11$ here are the complete lists of $d$-critical values. For $d= 13$, the  list below of $d$-critical values is also probably complete
\vs 

\noindent 
For \fbox{$d=3$} \; $\la=1$, $3$, $\pm i\sqrt{3}$.

\noindent 
For  \fbox{$d=5$} \; $\la=1$, $5$, $\pm \sqrt{5}$, $1\pm 2i$.

\noindent 
For  \fbox{$d=7$} \; $\la=1$, $7$, $\pm i \sqrt{7}$,  $\pm 2\pm i\sqrt{3}$.

\noindent 
For \fbox{$d=9$} \; $\la=1$, $9$, $\pm i\sqrt{3}$, $\pm 3i\sqrt{3}$,\\
\hspace*{1em} $\la=3$, 
$\pm \sqrt{5}\pm 2i$, 
$\pm 1\pm 2i\sqrt{2}$.

\noindent 
For \fbox{$d=11$} \; $\la =1$ , $11$, $4\pm \sqrt{5}$, \\
\hspace*{1em} $\la=\pm i\sqrt{11}$, 
$2\pm i\sqrt{7}$, 
$\pm 2\sqrt{2}\pm i\sqrt{3}$,\\
\hspace*{1em} $\la=\pm (1\!+\!\eps\sqrt{5})\pm i\sqrt{5\!-\!2\eps \sqrt{5}}$ with $\eps=\pm 1$.

\noindent 
For \fbox{$d=13$} \; $\la =1$ , $13$, $5\pm 2\sqrt{3}$,\\
\hspace*{1em}  $\la=\pm \sqrt{13}$,
$\pm 3\pm2i$,
$\pm \sqrt{5}\pm 2i\sqrt{2}$, 
$\pm 1\pm 2i\sqrt{3}$.
\vs 

\noindent For \fbox{$d=15$}, here is the complete list of  $d$-critical values associated with a 
symmetric critical function.\\ 
$\star$ 
$\la=$ product of a $3$-critical and a $5$-critical value, $\la= 6\pm \sqrt{21}$,\\
$\star$ $\la=-3$, $-5$, 
$\pm(\sqrt{3}\pm i\sqrt{2})(\sqrt{2}\pm i)$,
$\pm 2\pm i\sqrt{11}$, $\pm 2\sqrt{2}\pm i\sqrt{7}$,\\
$\star$   $\la=1\!+\!\eps\sqrt{5}\pm i\sqrt{9\!-\!2\eps \sqrt{5}}$
or $\pm\sqrt{10+2\eps\sqrt{5}}\pm i\sqrt{5-2\eps\sqrt{5}}$ 
with $\eps=\pm 1$,\\
$\star$  $\la$ is a root of 
the equation $\la^2-4 b\la+15=0$ 
where either 
\begin{eqnarray*}
b^3-2b^2-2\;=\; 0\hspace{2em} {\rm  or}
\hspace{2em}
b^3-b^2-b-1&=&0\;\;\; {\rm  or}\\
b^{10}\!-\!b^9\!-\!10b^8\!+\!10b^7
\!+\!34b^6\!-\!38b^5\!-\!43b^4
\!+\!65b^3\!+\!8b^2\!-\!40b\!+\!16&=&0.
\end{eqnarray*}

\noindent For \fbox{$d=17$}, here is 
a list of  $d$-critical values 
that contains all those associated with an antisymmetric critical function.\\
$\star$ 
$\la =1$ , $17$, $7\pm 4\sqrt{2}$,\\
$\star$ $\la= \pm \sqrt{17}$,
$\pm \sqrt{13}\pm 2i$,
$3\pm 2i\sqrt{2}$,  
$\pm \sqrt{5}\pm 2i\sqrt{3}$, 
$\pm 1\pm 4i$,\\
$\star$ $\la=\pm(1+2\eps\sqrt{2})\pm 2i\sqrt{2-\eps\sqrt{2}}$ with $\eps=\pm 1$,\\
$\star$  $\la$ is a root of 
the equation 
\begin{equation} 
	\label{eqnnonwei} 
	\la^2-2 c\la+17=0\;\;\; {\rm where}
\end{equation}
\begin{equation*}
c^{10}\!-\!6c^9\!-\!15c^8\!+\!
136c^7\!-\!62c^6\!-\!628c^5\!+\!
586c^4\!+\!232c^3\!+\!
733c^2\!-\!246c\!+\!293=0.
\end{equation*}

\subsection{A critical value which is not a Weil number} 
\label{secnonwei}

\bexa
\label{exanonwei} For $d=17$, there exists a critical value 
$\la\in \mc B^o_d$ which is not a Weil number.
\eexa

\begin{proof} We choose $\la$ to be
one of the $20$  critical values $\la$ given by \eqref{eqnnonwei}.
These values $\la$
are not Weil numbers. Indeed, among these $20$ Galois conjugates, only $8$ have absolute value equal to
$\sqrt{17}$. Those for which $c$ is real and  belongs 
to the interval $[-\sqrt{17},\sqrt{17}]$.

Using again Buchberger's algorithm for computing Groebner basis,
one can list, for each such $\la$,  all the antisymmetric  $\la$-critical functions $f$ with $f(1)=1$. There are $8$ of them. One can then check that, for a  well-chosen $\la$ approximately equal to $ 3.942+ 1.209 \, i$, one of these $\la$-critical  functions $f$ is proportional to  its conjugate Fourier transform $ \overline{\widehat{f}}$. Such a critical value $\la$
belongs to $\mc B^o_d$.
\end{proof}	
\newpage

{\small
	\bibliography{theta3}
}
\vs 

{\small
	\noindent
	Y. \textsc{Benoist}: CNRS, 
	Universit\'e Paris-Saclay,\hfill
	\texttt{yves.benoist@u-psud.fr}}

\end{document}